\documentclass[12pt,a4paper]{amsart} % Document Class
\usepackage{mathtools}
\DeclarePairedDelimiter\floor{\lfloor}{\rfloor}
\usepackage{orcidlink}
\usepackage{enumitem}
\usepackage[none]{hyphenat}
\usepackage{hyperref}
\usepackage{doi}
\newtheorem{theorem}{Theorem}[section]
\newtheorem{lemma}[theorem]{Lemma}

\newtheorem{corollary}[theorem]{Corollary}
\theoremstyle{definition}
\newtheorem{definition}[theorem]{Definition}
\newtheorem{remark}{Remark}
\newtheorem{example}[theorem]{Example}
\numberwithin{equation}{section}
\title[On some bounds for cardinal invariants of $Q_p(X,Y)$]{On some bounds for cardinal invariants of the space of quasicontinuous functions with the topology of pointwise convergence}

\author[Mandeep]{Mandeep Kumar$^{1}$\thinspace\orcidlink{0000-0001-8773-0927}}
\address{$^1$Department of Mathematics, University of Delhi; Delhi, India, 110007.}

\email{mjakhar5@gmail.com}

\author[Tyagi]{Brij Kishore Tyagi$^{2,\ast}$\thinspace\orcidlink{0000-0003-2660-2432}}
\address{$^2$Department of Applied Mathematics, Delhi Technological University, Delhi, India, 110042.}
\email{brijkishore.tyagi@gmail.com}
\thanks{*Corresponding author.}
\subjclass[2020]{Primary 54C35; Secondary 54A25.}
\keywords{Quasicontinuous functions, topology of pointwise convergence, cardinal invariants.}
\begin{document}
\emergencystretch=1em
%%%%%%%%%%%%%%%%%%%%%%%%%%%%%%%%%%%%%%%%%%%%%%%%%%%%%%%%%%%%%%%							Abstract
%%%%%%%%%%%%%%%%%%%%%%%%%%%%%%%%%%%%%%%%%%%%%%%%%%%%%%%%%%%%%%
\begin{abstract}
In this paper, we obtain some bounds for cardinal invariants such as character, pseudocharacter, tightness, {L}indel\"of number, hereditary {L}indel\"of number, hereditary density and spread of $Q_p(X,Y)$, the space of all quasicontinuous functions from $X$ to $Y$ equipped with the topology of pointwise convergence. We prove that the tightness, hereditary {L}indel\"of number, hereditary density and spread of the space $Q_p(\mathbb{R},\mathbb{R})$ are equal to the cardinality of $\mathbb{R}$.
\end{abstract}
%------------------------------------------------------------%
\maketitle

%%%%%%%%%%%%%%%%%%%%%%%%%%%%%%%%%%%%%%%%%%%%%%%%%%%%%%%%%%%%%%
%               Section: Introduction
%%%%%%%%%%%%%%%%%%%%%%%%%%%%%%%%%%%%%%%%%%%%%%%%%%%%%%%%%%%%%%
\section{Introduction}\label{sec1}

In \cite{Baire1899}, {B}aire  used the condition of quasicontinuity in the study of continuity points of separately continuous functions from $\mathbb{R}^2$ to $\mathbb{R}$. Later, {K}empisty \cite{Kempisty1932} introduced quasicontinuity as a weaker form of continuity for real-valued functions. Further, the properties of quasicontinuous functions were studied in many papers (see, for example, \cite{MR995972}). Quasicontinuous functions were found to have applications in various areas of mathematics, such as topological groups, dynamical systems, and the theory of minimal usco and minimal cusco maps.

%------------------------------------------------------------%
Some examples of quasicontinuous functions that are not continuous include the doubling function $D:[0,1)\to [0,1)$ defined by $D(x)=2x\pmod{1}$, the extended $\sin(1/x)$ function $f:\mathbb{R}\to\mathbb{R}$ defined by
	\[ f(x)= \begin{cases}
		\sin\left(\frac{1}{x}\right) & \text{if $x\neq 0$} \\
		0 & \text{if $x=0$},\\
	\end{cases} \]
the floor function from $\mathbb{R}$ to $\mathbb{R}$ defined by $\floor{x}=\max\{n\in\mathbb{Z}:n\leq x\}$ and the monotone real-valued functions that are left-continuous or right-continuous defined on $\mathbb{R}$.

%------------------------------------------------------------%
The set of all real-valued quasicontinuous functions defined on a topological space $X$ equipped with the topology of pointwise convergence, denoted by $Q_p(X,\mathbb{R})$, was studied in \cite{MR2934374,MR2854741,MR4116833,MR2660032,MR4409699}. The pointwise convergence of sequences of real-valued quasicontinuous functions defined on a {B}aire space was examined by {B}ors\'ik et al.\@ \cite{MR2854741}  and {H}ol\'{a} and {H}ol\'{y} \cite{MR2934374}. {H}ol\'{y} and {M}atej\'{\i}\v{c}ka \cite{MR2660032} studied the  metrizability, first countability, and closed and compact subsets of the space $Q_p(X,\mathbb{R})$. Some cardinal invariants of the space $Q_p(X,\mathbb{R})$ were investigated  by {H}ol\'{a} and {H}ol\'{y} \cite{MR4116833}. Furthermore, {K}umar and {T}yagi \cite{MR4491569} investigated certain cardinal invariants and special mappings such as restriction and induced maps for $Q_p(X,Y)$, the space of all quasicontinuous functions from a space $X$ to a space $Y$ equipped with the topology of pointwise convergence. {O}sipov \cite{MR4903474} studied the Fr\'echet--Urysohn property of the space $Q_p(X,\mathbb{R})$. The space of all real-valued quasicontinuous functions with the topology of uniform convergence on compacta was studied by {H}ol\'a et al.\@ \cite{MR4409699} and {H}ol\'a{} and {H}ol\'y \cite{MR4363318}.

%------------------------------------------------------------%
In this paper, we study cardinal invariants such as character ($\chi$), pseudocharacter ($\psi$), tightness ($t$), density tightness ($dt$), {L}indel\"of number ($l$), weak {L}indel\"of number ($wl$), hereditary {L}indel\"of number ($hl$), hereditary density ($hd$) and spread ($s$) related to the space $Q_p(X,Y)$. We obtain bounds for some of these cardinal invariants of $Q_p(X,Y)$. We also provide examples in most of these results where the inequalities are strict. Using the results in this paper, we determine character, tightness, hereditary {L}indel\"of number, hereditary density and spread of $Q_p(\mathbb{R},\mathbb{R})$.  More precisely, this paper is organized as follows.

Section~\ref{sec2} contains preliminary concepts. We define the notion of a quasicontinuous function and describe the topology of the space $Q_p(X,Y)$. We also introduce the subbasic and basic open subsets of $Q_p(X,Y)$ that will be used throughout the paper.

In Section~\ref{sec3}, we study character, pseudocharacter and tightness of $Q_p(X,Y)$.  We show that if $X$ is a regular space, then $Q_p(X,Y)$ separates disjoint closed and compact subsets of $X$. We give an example to show that such a separation is not necessarily true if $X$ is a Hausdorff space. Next, we prove that if $X$ is a Hausdorff space and $Y$ is a nontrivial $T_1$-space, then $\chi(Q_p(X,Y))=|X|\cdot \chi(Y)$. We show that for a regular space $X$ and a nontrivial $T_1$-space $Y$, $wl(X)\leq\psi(Q_p(X,Y))$ and $l(X^n)\leq t(Q_p(X,Y))$ for every $n\in\mathbb{N}$. For these inequalities, we provide examples in which the inequality is strict. We obtain that $t(Q_p(\mathbb{R},\mathbb{R}))=\mathfrak{c}$.

In Section~\ref{sec4}, we give bounds for {L}indel\"of number, hereditary {L}indel\"of number, hereditary density and spread of $Q_p(X,Y)$. We prove that for a regular space $X$ and a nontrivial $T_1$-space $Y$, $hd(X)\leq hl(Q_p(X,Y))$ and $hl(X)\leq hd(Q_p(X,Y))$. Furthermore, if $Y$ is Hausdorff, $dt(X^n)\leq l(Q_p(X,Y))$. Also, we prove that for a regular space $X$ and a nontrivial $T_1$-space $Y$, $c(X)\leq s(Q_p(X,Y))$, and $s(X^{n})\le s((Q_p(X,Y))^{n})$ for every $n\in\mathbb{N}$. Furthermore, if $Y$ is infinite, then $s(\Omega_n(X))\le s(Q_p(X,Y))$ for every $n\in\mathbb{N}$, where $\Omega_n(X)=\{(x_1,\dots,x_n)\in X^n: x_i\neq x_j \text{ whenever } i\neq j\}$. For each of these results, we provide examples in which the inequality is strict. We obtain that $hl(Q_p(\mathbb{R},\mathbb{R}))=hd(Q_p(\mathbb{R},\mathbb{R}))=s(Q_p(\mathbb{R},\mathbb{R}))=\mathfrak{c}$.

Thus, our main objective is to determine topological conditions on $X$ and $Y$ under which $Q_p(X,Y)$ has certain topological property. Although some results are analogue to  the results about $C_p(X,\mathbb{R})$, the proofs for $Q_p(X,Y)$ require properties specific to quasicontinuous mappings.

%%%%%%%%%%%%%%%%%%%%%%%%%%%%%%%%%%%%%%%%%%%%%%%%%%%%%%%%%%%%%%
%               Section: Preliminaries
%%%%%%%%%%%%%%%%%%%%%%%%%%%%%%%%%%%%%%%%%%%%%%%%%%%%%%%%%%%%%%
\section{Preliminaries}\label{sec2}

The space of real numbers with the usual topology is denoted by $\mathbb{R}$. The set of positive integers is denoted by $\mathbb{N}$. The cardinalities of $\mathbb{N}$ and $\mathbb{R}$ are denoted by $\aleph_0$ and $\mathfrak{c}$, respectively. Throughout this paper, $X,Y$ and $Z$ denote topological spaces unless otherwise stated. A nontrivial space is a topological space with more than one point. For a subset $A$ of $X$, $A^\mathrm{o}$ denotes the interior of $A$ in $X$ and $\overline{A}$ denotes the closure of $A$ in $X$. For a subspace $Y$ of $X$ and a subset $A$ of $Y$, the closure of $A$ relative to $Y$ is denoted by $\overline{A}_Y$.
 
 In \cite{Kempisty1932}, {K}empisty introduced quasicontinuity as a weaker form of continuity for real-valued functions. We use the following formulation of quasicontinuous function used by {N}eubrunn \cite{MR995972}.
%------------------------------------------------------------%
\begin{definition}\label{def: qcts}
A function $f:X\to Y$ is \emph{quasicontinuous at a point} $x\in X$ if for every open set $U$ containing $x$ and every open set $V$ containing $f(x)$, there exists a nonempty open set $G\subseteq U$ such that $f(G)\subseteq V$. If $f$ is quasicontinuous at every point of $X$, we say that $f$ is \emph{quasicontinuous}.
\end{definition}
%------------------------------------------------------------%
Note that every continuous function is quasicontinuous. Conversely, for $X=\mathbb{R}$ with the usual topology and $Y=\mathbb{R}$ with the {S}orgenfrey topology, the identity function from $X$ to $Y$ is quasicontinuous but nowhere continuous.

{L}evine \cite{MR166752} studied quasicontinuous functions under the name of semi-continuity using the terminology of semi-open sets. A subset $A$ of a space $X$ is said to be \emph{semi-open (or quasi-open)} \cite{MR995972}, if $A\subseteq \overline{A^\mathrm{o}}$. A function $f:X\to Y$ is \emph{semi-continuous} if for every open set $V$ in $Y$, $f^{-1}(V)$ is semi-open in $X$. A function $f:X\to Y$ is quasicontinuous if and only if it is semi-continuous.
%------------------------------------------------------------%
We use the following results from \cite{MR4491569} in most of our proofs.
\begin{lemma}\label{lem: sufficient_qcts}\cite[Lemma 4.1]{MR4491569}
Let $X$ and $Y$ be topological spaces and $f:X \rightarrow Y$ be a function such that for any $x\in X$, there exists an open set $G$ in $X$ such that $x\in \overline{G}$ and $f(y)=f(x)$ for all $y\in G$. Then $f$ is quasicontinuous.
\end{lemma}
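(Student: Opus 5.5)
We verify Definition~\ref{def: qcts} directly at an arbitrary point. Fix $x\in X$, an open set $U$ containing $x$, and an open set $V$ containing $f(x)$. By hypothesis there is an open set $G$ in $X$ with $x\in\overline{G}$ and $f(y)=f(x)$ for every $y\in G$. We will produce a nonempty open set inside $U$ whose image lies in $V$.

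The candidate is $W=U\cap G$. It is open as an intersection of two open sets, and clearly $W\subseteq U$. It is nonempty: $U$ is an open neighbourhood of $x$ and $x\in\overline{G}$, and every open neighbourhood of a point in the closure of $G$ meets $G$, so $U\cap G\neq\emptyset$. Finally, every $y\in W$ lies in $G$, so $f(y)=f(x)\in V$. Hence $f(W)\subseteq V$.

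Thus $f$ is quasicontinuous at $x$, and since $x$ was arbitrary, $f$ is quasicontinuous. The only point requiring care is the nonemptiness of $W$, which follows from the neighbourhood characterization of closure. No separation axioms on $X$ or $Y$ are needed.
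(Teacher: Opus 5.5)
Your proof is correct: $U\cap G$ is open, contained in $U$, nonempty because $x\in\overline{G}$, and $f$ is constantly $f(x)\in V$ on it, which is exactly what Definition~\ref{def: qcts} requires. The paper does not reprove this lemma but quotes it from the authors' earlier work, and your direct verification is the standard argument for it.
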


\begin{lemma}\label{lem: qcts finite-point separation}\cite[Lemma 4.2]{MR4491569}
Let $X$ and $Y$ be topological spaces such that $X$ is Hausdorff. For given $x_1,\dots,x_n\in X$ and (not necessarily distinct) $y_1,\dots,y_n\in Y$, there exists a quasicontinuous function $f:X\rightarrow Y$ such that $f(x_i)=y_i$ for each $i\in\{1,\dots,n\}$.
\end{lemma}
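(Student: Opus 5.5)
Here $X$ is Hausdorff, $x_1,\dots,x_n\in X$ and $y_1,\dots,y_n\in Y$. First remove repetitions among the $x_i$. If $x_i=x_j$, then consistency forces $y_i=y_j$, so we may assume the points $x_1,\dots,x_n$ are pairwise distinct. The plan is to build a step function $f$ that is constant on finitely many open pieces and then invoke Lemma~\ref{lem: sufficient_qcts}. That lemma asks that every point $x$ lie in the closure of some open set on which $f$ is constantly $f(x)$.

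\textbf{Step 1: separate the points.} Since $X$ is Hausdorff and the set is finite, there are pairwise disjoint open sets $U_1,\dots,U_n$ with $x_i\in U_i$. To get them, separate each pair $x_i,x_j$ by disjoint open sets $U_{ij}\ni x_i$ and $U_{ji}\ni x_j$. Then put $U_i=\bigcap_{j\neq i}U_{ij}$.

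\textbf{Step 2: define $f$.} Set $f(x)=y_i$ for $x\in U_i$. On the remaining set $F=X\setminus\bigcup_i U_i$ we need a value. Let $W=\bigl(X\setminus \overline{\bigcup_i U_i}\bigr)$, which is open, and set $f(x)=y_1$ for $x\in F$. Then $f(x_i)=y_i$ for every $i$.

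\textbf{Step 3: check the hypothesis of Lemma~\ref{lem: sufficient_qcts}.}
\begin{itemize}
\item For $x\in U_i$, take $G=U_i$; then $x\in G\subseteq\overline{G}$ and $f\equiv f(x)$ on $G$.
\item For $x\in F$, note that $f(x)=y_1$. Take $G=U_1\cup W$, which is open. By construction, $f$ equals $y_1$ on $U_1$ and also on $W\subseteq F$.
\end{itemize}

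\textbf{Main obstacle: showing $x\in\overline{G}$ for $x\in F$.} Since $X=\overline{\bigcup_i U_i}\cup W$, the point $x$ lies either in $W\subseteq\overline{G}$ or in $\overline{\bigcup_i U_i}$. In the second case $x$ may lie in the closure of some $U_j$ with $j\neq1$ but not in the closure of $U_1$, so $G=U_1\cup W$ can fail.

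The fix is to choose the constant value on $F$ pointwise. Each $x\in F$ lies either in $\overline{W}$ or in $\overline{U_j}$ for some $j$, because a finite union of closures is the closure of the union. Define $f(x)=y_j$ for the least $j$ with $x\in\overline{U_j}$, and $f(x)=y_1$ if $x\in\overline{W}$ lies in no $\overline{U_j}$. This leaves the values at the $x_i$ unchanged. Every $x\in F$ now has an open set $G$ with $x\in\overline{G}$ and $f\equiv f(x)$ on $G$:
\begin{itemize}
\item if $f(x)=y_j$ came from $x\in\overline{U_j}$, take $G=U_j$;
\item otherwise $x\in\overline{W}$, and we take $G=W$. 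This requires $f\equiv y_1$ on $W$. Points of $W$ lie outside $\overline{\bigcup_i U_i}$, hence in no $\overline{U_j}$, so they do receive the value $y_1$.
\end{itemize}
Lemma~\ref{lem: sufficient_qcts} then gives that $f$ is quasicontinuous.
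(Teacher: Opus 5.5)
Your proof is correct. After the fix, $f$ is simply the least-index step function $f(x)=y_j$ with $j=\min\{k: x\in\overline{U_k}\}$ on $\bigcup_k\overline{U_k}$, and $y_1$ on the open set $W$. This agrees with $y_i$ on $U_i$, because a point of $U_i$ lies in no $\overline{U_k}$ for $k\neq i$, and each case of Lemma~\ref{lem: sufficient_qcts} is checked correctly.

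The paper does not reprove this lemma; it quotes it from \cite{MR4491569}. However, your construction is the same device the paper uses for the functions $g_d$ in the proof of Theorem~\ref{thm: s(omega)<s(Q_p)}. In a final write-up, drop the abandoned attempt with $G=U_1\cup W$ and state the least-index rule directly.
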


\begin{lemma}\label{lem: qcts point-closed separation}\cite[Lemma 4.12]{MR4491569}
Let $X$ and $Y$ be spaces such that $X$ is regular. Then for any $x\in X$, any nonempty closed set $F\subseteq X$ such that $x\notin F$ and $y_1,y_2\in Y$, there exists a quasicontinuous function $f:X\to Y$ such that $f(x)=y_1$ and $f(F)=\{y_2\}$.
\end{lemma}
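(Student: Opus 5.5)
The plan is to build $f$ as a two-valued step function whose pieces are separated by the closure of a suitable open set, and then to verify quasicontinuity with Lemma~\ref{lem: sufficient_qcts}.

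\emph{Construction.} Since $X$ is regular and $x\notin F$ with $F$ closed, there is an open set $U$ with $x\in U$ and $\overline{U}\cap F=\emptyset$. Define $f:X\to Y$ by
\[
f(z)=\begin{cases} y_1 & \text{if } z\in\overline{U},\\ y_2 & \text{if } z\in X\setminus\overline{U}.\end{cases}
\]
Then $f(x)=y_1$ because $x\in U\subseteq\overline{U}$. Also $f(F)=\{y_2\}$, because $F\subseteq X\setminus\overline{U}$ and $F\neq\emptyset$.

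\emph{Quasicontinuity.} We check the hypothesis of Lemma~\ref{lem: sufficient_qcts} at each point $z\in X$, splitting into two cases.
\begin{itemize}
\item If $z\in\overline{U}$, take $G=U$. This set is open, satisfies $z\in\overline{G}$, and $f$ is constantly $y_1=f(z)$ on $G$.
\item If $z\notin\overline{U}$, take $G=X\setminus\overline{U}$. This set is open and contains $z$, so $z\in\overline{G}$, and $f$ is constantly $y_2=f(z)$ on $G$.
\end{itemize}
Hence Lemma~\ref{lem: sufficient_qcts} gives that $f$ is quasicontinuous.

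The only real point is the choice of the dividing set. One must use $\overline{U}$ rather than $U$, so that both the set where $f=y_1$ and the set where $f=y_2$ are contained in the closure of their own open interiors; boundary points of $U$ are then handled by $G=U$. Regularity is used exactly once, to make $\overline{U}$ miss $F$. The case $y_1=y_2$ is trivial, since a constant function works, but it is also covered by the construction above.
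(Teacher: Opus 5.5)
Your proof is correct and is essentially the paper's approach: the paper only cites this lemma, but its proof of Lemma~\ref{lem: qcts compact-closed separation} uses exactly your construction (an open $U$ with $\overline{U}\cap F=\emptyset$, $f=y_1$ on $\overline{U}$ and $y_2$ off it, then Lemma~\ref{lem: sufficient_qcts}). With $K=\{x\}$, the finite-subcover step there reduces to your single use of regularity.
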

%------------------------------------------------------------%
Let $F(X,Y)$ be the set of all functions and $C(X,Y)$ be the set of all continuous functions from $X$ to $Y$. The function spaces $F(X,Y)$ and $C(X,Y)$ with the topology of pointwise convergence are denoted by $F_p(X,Y)$ and $C_p(X,Y)$, respectively. The space $C_p(X,\mathbb{R})$ has been widely studied in the literature (see, for example, \cite{MR1144519,MR1039321,MR953314,MR3024898}).

Let $X$ and $Y$ be topological spaces with topologies $\tau(X)$ and $\tau(Y)$, respectively. For $x\in X$ and $V\in\tau(Y)$, let $[x;V]=\{f\in F(X,Y):f(x)\in V\}$. Then the space $F_p(X,Y)$ has a subbasis $\mathcal{S}=\{[x;V]:x\in X,V\in\tau (Y)\}$.

Let $Q(X,Y)$ be the set of all quasicontinuous functions in $F(X,Y)$. The subspace $Q(X,Y)$ of the space $F_p(X,Y)$ is denoted by $Q_p(X,Y)$. Thus $C_p(X,Y)\subseteq Q_p(X,Y)\subseteq F_p(X,Y)$.
%------------------------------------------------------------%
For $x\in X$ and $V\in\tau (Y)$, denote
\[
[x;V]=\{f\in Q(X,Y):f(x)\in V\}.
\]
Then $\mathcal{S}'=\{[x;V]:x\in X,V\in\tau (Y)\}$ is a subbasis for the space $Q_p(X,Y)$. Observe that for $x\in X$ and $V_1,V_2\in\tau(Y)$, we have $[x;V_1]\cap [x;V_2]=[x;V_1\cap V_2]$. For $x_1,\dots,x_n\in X$ and $V_1,\dots,V_n\in\tau(Y)$, denote
\[[x_1,\dots,x_n;V_1,\dots,V_n]=\{f\in Q(X,Y):f(x_i)\in V_i,1\leq i\leq n\}.
\]
Clearly, the family
\[
\mathcal{B}=\{[x_1,\dots,x_n;V_1,\dots,V_n]:x_i\in X, V_i\in\tau (Y), 1\leq i\leq n,n\in\mathbb{N}\}
\]
is a basis for the space $Q_p(X,Y)$. Note that if $\mathcal{V}$ is a basis for $Y$, then the family
\[
\mathcal{B}'=\{[x_1,\dots,x_n;V_1,\dots,V_n]:x_i\in X,V_i\in\mathcal{V},1\leq i\leq n,n\in\mathbb{N}\}
\]
is also a basis for the space $Q_p(X,Y)$.

%%%%%%%%%%%%%%%%%%%%%%%%%%%%%%%%%%%%%%%%%%%%%%%%%%%%%%%%%%%%%%
\section{Cardinal bounds involving character and tightness of $Q_p(X,Y)$} \label{sec3}
%%%%%%%%%%%%%%%%%%%%%%%%%%%%%%%%%%%%%%%%%%%%%%%%%%%%%%%%%%%%%%

In this section, we give bounds for cardinal invariants of $Q_p(X,Y)$ which include character, pseudocharacter and tightness. We also discuss the results about $Q_p(X,Y)$ analogous to {A}rkhangel'skii-{P}ytkeev \cite{MR1144519} theorem. We will use the following lemma in the proof of Theorem~\ref{thm: l(X^n)<t(Q_p)} and Theorem~\ref{thm: dt(X^n)<l(Q_p)}.

%------------------------------------------------------------%
\begin{lemma}\label{lem: qcts compact-closed separation}
Let $X$ and $Y$ be spaces such that $X$ is regular. Let $K$ be a nonempty compact subset and $F$ be a nonempty closed subset of $X$ such that $K\cap F=\emptyset$ and $y_1,y_2\in Y$. Then there exists $f\in Q(X,Y)$ such that $f(K)=\{y_1\}$ and $f(F)=\{y_2\}$.
\end{lemma}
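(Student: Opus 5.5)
Since $X$ is regular and $K$ is compact with $K\cap F=\emptyset$, the first step is to find an open set $U$ with $K\subseteq U$ and $\overline{U}\cap F=\emptyset$. For each $x\in K$, regularity applied to $x\notin F$ gives an open $U_x\ni x$ with $\overline{U_x}\cap F=\emptyset$. Compactness of $K$ yields finitely many $x_1,\dots,x_m$ with $K\subseteq U:=U_{x_1}\cup\dots\cup U_{x_m}$. Then $\overline{U}=\bigcup_i\overline{U_{x_i}}$ is disjoint from $F$. This is the only place compactness enters, and it is the standard ``compact and closed sets are separated in regular spaces'' argument.

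Next, define $f:X\to Y$ by $f(x)=y_1$ for $x\in\overline{U}$ and $f(x)=y_2$ for $x\in X\setminus\overline{U}$. Then $f(K)=\{y_1\}$ because $K\subseteq U\subseteq\overline{U}$, and $f(F)=\{y_2\}$ because $F\subseteq X\setminus\overline{U}$. If $y_1=y_2$ we could simply take a constant function, but the construction works regardless.

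It remains to check quasicontinuity, which I would do via Lemma~\ref{lem: sufficient_qcts}. For $x\in\overline{U}$, take $G=U$: this set is open, $x\in\overline{G}$, and $f\equiv y_1=f(x)$ on $G$. For $x\in X\setminus\overline{U}$, take $G=X\setminus\overline{U}$: this set is open, contains $x$, and $f\equiv y_2=f(x)$ on $G$. Hence $f\in Q(X,Y)$.

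The main subtlety is choosing the partition so that every point lies in the closure of an open set on which $f$ takes its own value. Points of the boundary of $U$ are assigned $y_1$, and $U$ witnesses this. A cruder split such as $f=y_1$ on $U$ and $y_2$ elsewhere would fail at boundary points, since they may not be in the closure of the interior of $X\setminus U$. With the split by $\overline{U}$, no step should present real difficulty.
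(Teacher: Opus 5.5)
Your proof is correct and follows essentially the same route as the paper: regularity plus compactness of $K$ give an open $U\supseteq K$ with $\overline{U}\cap F=\emptyset$, and the two-valued function split along $\overline{U}$ is shown to be quasicontinuous via Lemma~\ref{lem: sufficient_qcts}. Your explicit witnesses $G=U$ and $G=X\setminus\overline{U}$ spell out the verification of that lemma's hypothesis, which the paper leaves implicit.
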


\begin{proof}
Since $X$ is regular and $F$ is a closed subset of $X$ such that $K\cap F=\emptyset$. For each $x\in K$ there exists an open subset $U_x$ of $X$ such that $x\in U_x\subseteq \overline{U_x}\subseteq X\setminus F$. Now, $\{U_x:x\in K\}$ is an open cover of $K$ and $K$ is compact, there exist $x_1,\ldots,x_n\in K$ such that $K\subseteq U_{x_1}\cup\dots\cup U_{x_n}$. Let $U=U_{x_1}\cup\dots\cup U_{x_n}$. Then $U$ is open, $K\subseteq U$, and $\overline U=\overline{U_{x_1}}\cup\dots\cup \overline{U_{x_n}}\subseteq X\setminus F$. Now, define a function $f:X\to Y$ such that
	\[ f(z)= \begin{cases}
		y_1 & \text{if $z\in \overline{U}$} \\
		y_2 & \ \text{if $z\in X\setminus\overline{U}$} \\
	\end{cases}. \]
By Lemma~\ref{lem: sufficient_qcts}, $f$ is a quasicontinuous function such that $f(K)=\{y_1\}$ and $f(F)=\{y_2\}$.
\end{proof}
%------------------------------------------------------------%
If the regularity assumption on $X$ in Lemmas~\ref{lem: qcts point-closed separation} and Theorem~\ref{lem: qcts compact-closed separation} is weakened to Hausdorff, the results need not hold, as the following example shows.

\begin{example}\label{exm: qcts compact-closed sep fail}
Consider $X=\mathbb{R}$ with the topology $\tau$ having a base $\mathcal{B}=\{U\setminus C:U$ is open in usual topology of $\mathbb{R}$ and $C\subseteq\mathbb{R}$ is countable$\}$. Let $Y=\mathbb{R}$ be the space with the usual topology. Since the usual topology is contained in $\tau$, it follows that $X$ is a Hausdorff space. Choose $p\in\mathbb{R}\setminus\mathbb Q$, and let $K=\{p\}$ and $F=\mathbb Q$. Then $K$ is compact and $F$ is closed in $X$ with $K\cap F=\emptyset$.

Suppose, to the contrary, that there exists $f\in Q(X,\mathbb{R})$ such that $f(p)=0$ and $f(F)=\{1\}$. Since $f$ is quasicontinuous at $p$, for the open neighborhood $X\setminus\mathbb Q$ of $p$ and for the open neighborhood $(-1/2,1/2)$ of $0$, there exists a nonempty open set $G\subseteq X\setminus\mathbb Q$ such that $f(G)\subseteq(-1/2,1/2)$. Choose a basic open set $B=I\setminus C\subseteq G$, where $I$ is an open interval in usual topology of $\mathbb{R}$ and $C$ is countable. Since $B\subseteq X\setminus\mathbb Q$, every rational point of $I$ belongs to $C$. Choose $q\in I\cap\mathbb Q$ and an open interval $J$ in usual topology of $\mathbb{R}$ such that $q\in J\subseteq I$. Then $f(q)=1$. Note that every nonempty open subset of $X$ contained in $J$ has nonempty intersection with $B$. In fact, if $H\subseteq J$ is nonempty open in $X$, then $H$ contains some basic open set $L\setminus C_1$, where $L\subseteq J$ is a nonempty open interval in usual topology of $\mathbb{R}$ and $C_1$ is countable; since $L$ is uncountable and $C\cup C_1$ is countable, there is $r\in L\setminus(C\cup C_1)$, and then $r\in H\cap B$.

Since $f$ is quasicontinuous at $q$ and $J$ is an open neighborhood of $q$ and the interval $(1/2,3/2)$ is an open neighborhood of $f(q)=1$. So there exists a nonempty open subset $H\subseteq J$ such that $f(H)\subseteq(1/2,3/2)$. But $H\cap B\neq\emptyset$ and $f(B)\subseteq(-1/2,1/2)$, a contradiction.
\end{example}

%------------------------------------------------------------%
The \emph{character} of a space $X$ is defined as $\chi(X)=\sup\{\chi(x,X):x\in X\}$, where $\chi(x,X)=\aleph_0+\min\{\vert\mathcal{B}_x\vert:\mathcal{B}_x$ is a base at $x\}$ \cite{MR3024898}.

The \emph{tightness} of a space $X$ is defined by $t(X)=\sup\{t(X,x):x\in X\}$, where $t(X,x)=\aleph_0+\min\{\kappa:$ for each $Y\subseteq X$ with $x\in\overline{Y}$ there is $A\subseteq Y$ with $|A|\leq\kappa$ and $x\in\overline{A}\}$ \cite{MR3024898}.
%------------------------------------------------------------%
\begin{theorem}\label{thm: char(Q_p)=|X| char(Y)}
	Let $X$ be a Hausdorff space and let $Y$ be a nontrivial $T_1$-space. Then $\chi(Q_p(X,Y))=|X|\cdot \chi(Y)$.
\end{theorem}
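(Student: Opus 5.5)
We prove the two inequalities $\chi(Q_p(X,Y))\le |X|\cdot\chi(Y)$ and $\chi(Q_p(X,Y))\ge |X|\cdot\chi(Y)$ separately.

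\emph{Upper bound.} Fix $f\in Q(X,Y)$. For each $x\in X$ choose a local base $\mathcal{B}_{f(x)}$ at $f(x)$ in $Y$ with $|\mathcal{B}_{f(x)}|\le\chi(Y)$. The family of basic sets $[x_1,\dots,x_n;V_1,\dots,V_n]$ with $V_i\in\mathcal{B}_{f(x_i)}$ is a local base at $f$ in $Q_p(X,Y)$. Indeed, any basic neighbourhood $[x_1,\dots,x_n;W_1,\dots,W_n]$ of $f$ contains such a set, obtained by shrinking each $W_i$ to some $V_i\in\mathcal{B}_{f(x_i)}$ with $V_i\subseteq W_i$. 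The number of such sets is at most the number of finite sequences from $X\times\bigcup_x\mathcal{B}_{f(x)}$, which is at most $\aleph_0\cdot|X|\cdot\chi(Y)=|X|\cdot\chi(Y)$, since $\chi(Y)\ge\aleph_0$ by the definition of $\chi$. Hence $\chi(f,Q_p(X,Y))\le|X|\cdot\chi(Y)$ for every $f$.

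\emph{$\chi(Y)$ is a lower bound.} Fix $y\in Y$ and $f\in Q(X,Y)$ with $f(x_0)=y$ for some $x_0\in X$; the constant map $y$ works. The evaluation map $e\colon Q_p(X,Y)\to Y$, $g\mapsto g(x_0)$, is continuous, and it is open onto its image, because the image of a basic set $[x_0,x_1,\dots,x_n;V_0,\dots,V_n]$ is exactly $V_0$. To see this, take $z\in V_0$ and use the Hausdorffness of $X$ and Lemma~\ref{lem: qcts finite-point separation} to get a quasicontinuous $g$ with $g(x_0)=z$ and $g(x_i)=f(x_i)$; we may assume the $x_i$ are distinct from $x_0$. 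Continuous open surjections do not increase character, so a local base $\{U_\alpha\}$ at the constant function $y$ gives the local base $\{e(U_\alpha)\}$ at $y$. Therefore $\chi(y,Y)\le\chi(Q_p(X,Y))$. Also $\aleph_0\le\chi(Q_p(X,Y))$ by definition.

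\emph{$|X|$ is a lower bound.} We may assume $X$ is infinite. Pick distinct $a,b\in Y$; since $Y$ is $T_1$, the set $V=Y\setminus\{b\}$ is an open neighbourhood of $a$. Let $\mathcal{U}$ be a local base at the constant function $a$, consisting of basic sets $[x_1,\dots,x_n;V_1,\dots,V_n]$ after shrinking, and let $S(U)$ denote the finite support $\{x_1,\dots,x_n\}$ of $U$. Suppose $|\mathcal{U}|<|X|$. Then $\bigcup_{U\in\mathcal{U}}S(U)$ has cardinality less than $|X|$, so some $x\in X$ lies outside it. The neighbourhood $[x;V]$ of $a$ must contain some $U\in\mathcal{U}$. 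By Lemma~\ref{lem: qcts finite-point separation}, there is a quasicontinuous $g$ with $g=a$ on $S(U)$ and $g(x)=b$. Then $g\in U$ but $g\notin[x;V]$, a contradiction. Hence $|X|\le\chi(Q_p(X,Y))$.

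The main point needing care is the openness of the evaluation map in the $\chi(Y)$ step, which rests on the finite interpolation supplied by Lemma~\ref{lem: qcts finite-point separation} in Hausdorff $X$. Everything else is routine counting.
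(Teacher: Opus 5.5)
Your proof is correct. The upper bound and the $|X|$ lower bound match the paper's proof step for step. The paper runs the $|X|$ argument at an arbitrary $f$, choosing $y_1\neq f(x_0)$; you run it at a constant function. Either suffices, since $\chi$ is a supremum over points.

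The genuine difference is the $\chi(Y)$ lower bound. You show that the evaluation map $e_{x_0}$ is a continuous open surjection. Openness needs Lemma~\ref{lem: qcts finite-point separation}, and hence Hausdorffness of $X$. The paper instead observes that $i\colon y\mapsto c_y$ embeds $Y$ into $Q_p(X,Y)$, since $i^{-1}([x;V])=V$ and $i(V)=[x;V]\cap i(Y)$. It then uses that character does not increase when passing to a subspace, which requires no hypothesis on $X$ at all.

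The step you flag as needing the most care can be bypassed inside your own setup. Since $e_{x_0}\circ i=\mathrm{id}_Y$ and both maps are continuous, $i$ is already an embedding; indeed $Y$ is a retract of $Q_p(X,Y)$. What your open-map route buys is the pointwise bound $\chi(g(x_0),Y)\le\chi(g,Q_p(X,Y))$ at every $g$, not just at constant functions. That is more than this theorem needs, and it is paid for with the Hausdorff assumption.

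One small slip in the openness argument: the function you interpolate from must be a member of the (nonempty) basic set, not the previously fixed $f$ with $f(x_0)=y$. Otherwise $g(x_i)=f(x_i)$ need not lie in $V_i$. The fix is simply to pick any element of the basic set.
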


\begin{proof}
	First, we prove that $\chi(Q_p(X,Y))\leq |X|\cdot\chi(Y)$. Let $f\in Q(X,Y)$ be fixed. For each $x\in X$, let $\mathcal{B}_{f(x)}$ be a local base at $f(x)$ in $Y$ such that $|\mathcal{B}_{f(x)}|\leq\chi(Y)$. Then $\Gamma_f=\{[x_1,\dots,x_n;B_1,\dots,B_n]:x_i\in X, B_i\in\mathcal{B}_{f(x_i)},n\in\mathbb{N}\}$ is a local base at $f$ in $Q_p(X,Y)$. Hence, $\chi(f,Q_p(X,Y))\leq |\Gamma_f|\leq|X|\cdot\chi(Y)$. Since $f\in Q_p(X,Y)$ is arbitrary, we have $\chi(Q_p(X,Y))\leq |X|\cdot\chi(Y)$.
	
	To prove the converse, it suffices to show that $|X|\leq\chi(Q_p(X,Y))$ and $\chi(Y)\leq\chi(Q_p(X,Y))$. Fix $f\in Q(X,Y)$ and let $\mathcal{U}$ be a local base at $f$ in $Q_p(X,Y)$. For each $U\in\mathcal{U}$, choose a basic neighborhood $B_U=[x^U_1,\dots,x^U_n;V^U_1,\dots,V^U_n]$ of $f$ such that $f\in B_U\subseteq U$, and let $A=\{x^U_1,\dots,x^U_n:U\in\mathcal{U}\}$. Then $|A|\leq|\mathcal{U}|$. We claim that $A=X$. If possible, suppose that $x_0\in X\setminus A$. Choose $y_1\in Y$ with $y_1\neq f(x_0)$ and let $O=Y\setminus\{y_1\}$. Then $[x_0;O]=\{g\in Q_p(X,Y):g(x_0)\in O\}$ is an open neighborhood of $f$, so there exists $U\in\mathcal{U}$ such that $U\subseteq [x_0;O]$. Let $B_U=[x^U_1,\dots,x^U_n;V^U_1,\dots,V^U_n]$ and  $F=\{x^U_1,\dots,x^U_n\}$. Since $x_0\notin A$, we have $x_0\notin F$. By Lemma~\ref{lem: qcts finite-point separation}, there exists $h\in Q(X,Y)$ such that $h(x)=f(x)$ for every $x\in F$ and $h(x_0)=y_1$. Then $h\in B_U\subseteq U\subseteq [x_0;O]$, but $h(x_0)=y_1\notin O$, a contradiction. Thus $A=X$, and hence $|X|\leq|\mathcal{U}|$. Since $\mathcal{U}$ was arbitrary, $|X|\leq\chi(f,Q_p(X,Y))\leq\chi(Q_p(X,Y))$.
	
	To show $\chi(Y)\leq\chi(Q_p(X,Y))$, consider the injection map $i:Y\to Q_p(X,Y)$ such that $i(y)=c_y$ for each $y\in Y$, where $c_y$ is the constant function such that $c_y(x)=y$ for all $x\in X$. Note that $i$ is injective. Since for each $x\in X$ and each open $V$ in $Y$,  $i^{-1}([x;V])=V$ and $i(V)=[x;V]\cap i(Y)$. It is easy to see that the injection map $i$ is an embedding. Thus, $Y$ can be identified with the subspace $i(Y)$ of $Q_p(X,Y)$. Hence, $\chi(Y)\leq\chi(Q_p(X,Y))$.
\end{proof}
%------------------------------------------------------------%
\begin{corollary}
	Let $X$ be a Hausdorff space and $Y$ be a nontrivial first countable $T_1$-space. Then $Q_p(X,Y)$ is first countable if and only if $X$ is countable.
\end{corollary}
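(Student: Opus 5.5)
This follows directly from Theorem~\ref{thm: char(Q_p)=|X| char(Y)}, which gives $\chi(Q_p(X,Y))=|X|\cdot\chi(Y)$ under exactly the present hypotheses ($X$ Hausdorff, $Y$ a nontrivial $T_1$-space). The plan is to translate first countability into the statement $\chi(Q_p(X,Y))=\aleph_0$, since by definition $\chi(x,Z)$ includes the summand $\aleph_0$. Hence a space $Z$ is first countable if and only if $\chi(Z)\le\aleph_0$, i.e.\ $\chi(Z)=\aleph_0$.

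For the forward direction, assume $Q_p(X,Y)$ is first countable. Then $|X|\le |X|\cdot\chi(Y)=\chi(Q_p(X,Y))=\aleph_0$, so $X$ is countable. More directly, the first part of the proof of Theorem~\ref{thm: char(Q_p)=|X| char(Y)} already shows $|X|\le\chi(f,Q_p(X,Y))$ for any fixed $f$. That argument needs some $f\in Q(X,Y)$, and a constant function serves.

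For the converse, assume $X$ is countable. Since $Y$ is first countable, $\chi(Y)=\aleph_0$, so $\chi(Q_p(X,Y))=|X|\cdot\aleph_0=\aleph_0$. This uses the convention that cardinal products here absorb $\aleph_0$: if $X$ is finite, then $|X|\cdot\chi(Y)=\aleph_0$ because $\chi(Y)\ge\aleph_0$ by definition. Alternatively, one can see directly that the local base $\Gamma_f$ from the theorem's proof is countable. It is indexed by finite sequences from the countable set $X$, each paired with a choice from countably many $B_i\in\mathcal{B}_{f(x_i)}$.

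There is no real obstacle. The only points to check are the cardinal arithmetic conventions ($\aleph_0$ built into $\chi$) and that $X\neq\emptyset$, or else the trivial case where $Q_p(X,Y)$ is a single point and hence first countable with $X$ countable.
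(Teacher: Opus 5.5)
Your proposal is correct and takes the same route as the paper, which states this corollary without separate proof as an immediate consequence of $\chi(Q_p(X,Y))=|X|\cdot\chi(Y)$, since first countability means character $\aleph_0$. One small slip: the bound $|X|\le\chi(f,Q_p(X,Y))$ comes from the second paragraph of that theorem's proof (the lower bound), not the first, which gives the upper bound via $\Gamma_f$.
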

%------------------------------------------------------------%
\begin{corollary}\label{cor: t(Q)<X.Chi(Y)}
	Let $X$ be a Hausdorff space and let $Y$ be a nontrivial $T_1$-space. Then $t(Q_p(X,Y))\leq |X|\cdot \chi(Y)$.
\end{corollary}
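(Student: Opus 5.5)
The corollary follows directly from Theorem~\ref{thm: char(Q_p)=|X| char(Y)} together with the standard inequality $t(Z)\leq\chi(Z)$, which holds for every topological space $Z$. First I would record this inequality. Fix $z\in Z$ and a local base $\mathcal{B}_z$ at $z$ with $|\mathcal{B}_z|\leq\chi(Z)$. Let $A\subseteq Z$ with $z\in\overline{A}$. For each $B\in\mathcal{B}_z$ the set $B$ is a neighborhood of $z$, so $B\cap A\neq\emptyset$, and I choose a point $a_B\in B\cap A$. Put $A_0=\{a_B:B\in\mathcal{B}_z\}$. Then $A_0\subseteq A$ and $|A_0|\leq\chi(Z)$. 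Every neighborhood of $z$ contains some $B\in\mathcal{B}_z$, hence contains $a_B$, so $z\in\overline{A_0}$. This gives $t(Z,z)\leq\chi(z,Z)\leq\chi(Z)$, where the $\aleph_0$ summand in both definitions causes no trouble. Taking the supremum over $z$ yields $t(Z)\leq\chi(Z)$.

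Next I would apply this with $Z=Q_p(X,Y)$. Since $X$ is Hausdorff and $Y$ is a nontrivial $T_1$-space, Theorem~\ref{thm: char(Q_p)=|X| char(Y)} gives $\chi(Q_p(X,Y))=|X|\cdot\chi(Y)$. Hence $t(Q_p(X,Y))\leq\chi(Q_p(X,Y))=|X|\cdot\chi(Y)$.

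Only the upper-bound half of the theorem is really needed, and that half holds even without the hypotheses. There is no genuine obstacle. The only point requiring care is the choice of $a_B$ for each basic neighborhood, which uses the axiom of choice in the standard way.
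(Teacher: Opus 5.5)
Your proof is correct and is the paper's argument: the paper also derives the corollary from $t(Z)\leq\chi(Z)$ together with $\chi(Q_p(X,Y))=|X|\cdot\chi(Y)$. The only difference is that you write out the short standard proof of $t(Z)\leq\chi(Z)$, which the paper takes as known.
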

\begin{proof}
	Since for any space $Z$, $t(Z)\leq \chi(Z)$ holds, the result follows from Theorem~\ref{thm: char(Q_p)=|X| char(Y)}.
\end{proof}
%------------------------------------------------------------%
In the next example using \cite[Example 4.1]{MR4363318}, we show that $t(Q_p(\mathbb{R},\mathbb{R}))\geq\mathfrak{c}$.
\begin{example}\label{exm: t(Q_p)>w}
Let $C$ be the standard Cantor set. For every finite set $F\subseteq C$, define $\phi_F:C\to\{0,1\}$ by
\[\phi_F(c)=
\begin{cases}
0 &\text{if }c\in F \\
1 & \text{if }c\in C\setminus F\\
\end{cases}
\]
Let us define $f_F:\mathbb{R}\to\mathbb{R}$ by
\[f_F(x)=
\begin{cases}
	\sin(1/d(x,C)) &\text{if }x\notin C \\
	\phi_F(x) & \text{if } x\in C\\
\end{cases}
\]
 where $d(x,C)=\inf\{|x-c|:c\in C\}$. Also define $h:\mathbb{R}\to\mathbb{R}$ by
 \[h(x)=
 \begin{cases}
 	\sin(1/d(x,C)) &\text{if }x\notin C \\
 	0 & \text{if } x\in C\\
 \end{cases}
 \]
 
By \cite[Example 4.1]{MR4363318}, each function $f_F$ and the function $h$ are quasicontinuous.
 
 Let $A=\{f_F:F\subseteq C$ and $F$ is finite$\}$. We first show that $h\in\overline{A}$. Let $U=[x_1,\dots,x_n;V_1,\dots,V_n]$ be a basic neighborhood of $h$ in $Q_p(\mathbb{R},\mathbb{R})$. Let $E=\{x_1,\dots,x_n\}$ and $F=E\cap C$. If $x\in E\setminus C$, then $f_F(x)=\sin(1/d(x,C))=h(x)$, while if $x\in E\cap C=F$, then $f_F(x)=0=h(x)$. Thus $f_F(x)=h(x)\in V_i$ for each $x\in E$, and hence $f_F\in U$. This proves that $h\in\overline{A}$.
 
 Now let $B\subseteq A$ with $|B|<\mathfrak{c}$. Thus $B=\{f_F:F\in\mathcal{F}\}$, where $\mathcal{F}$ is some family of finite subsets of $C$ with $|\mathcal{F}|<\mathfrak{c}$. Since every member of $\mathcal{F}$ is finite, $|\bigcup\mathcal{F}|<\mathfrak{c}$, and therefore there exists $c_B\in C\setminus\bigcup\mathcal{F}$. For every $F\in\mathcal{F}$, we have $c_B\notin F$, and hence $f_F(c_B)=1$, whereas $h(c_B)=0$. Hence, for the basic neighborhood $H=[c_B;(-1/2,1/2)]$ of $h$, we have $H\cap B=\emptyset$. Thus $h\notin\overline{B}$. It follows that $t(h,Q_p(\mathbb{R},\mathbb{R}))\geq\mathfrak{c}$ and hence $t(Q_p(\mathbb{R},\mathbb{R}))\geq\mathfrak{c}$.
\end{example}
%------------------------------------------------------------%
\begin{remark}
From Corollary~\ref{cor: t(Q)<X.Chi(Y)}, it follows that $t(Q_p(\mathbb{R},\mathbb{R}))\leq\mathfrak{c}$ and from Example~\ref{exm: t(Q_p)>w}, we have $t(Q_p(\mathbb{R},\mathbb{R}))\geq\mathfrak{c}$. Hence, $t(Q_p(\mathbb{R},\mathbb{R}))=\mathfrak{c}$.
\end{remark}
%------------------------------------------------------------%
The \emph{pseudocharacter of a point} $x$ in a $T_1$-space $X$ is $\psi(x,X)=\aleph_0+\min\{\vert\gamma\vert:\gamma$ is a family of open sets in $X$ such that $\cap\gamma=\{x\}\}$. The \emph{pseudocharacter} of a space $X$ is defined as $\psi(X)=\sup\{\psi(x,X):x\in X\} $\cite{MR3024898}.

The cardinal $l(X)=\aleph_0+\min\{\kappa$: every open cover has a subcover of cardinality $\leq \kappa\}$ is called the \emph{{L}indel\"of number} of the space $X$  \cite{MR3024898}. Note that a space is {L}indel\"of if $l(X)=\aleph_0$.

The \emph{weak {L}indel\"of number} or \emph{weak covering number} of a space $X$ is defined as $wl(X)=\aleph_0+\min\{\kappa:$ for every open cover $\mathcal{U}$ there exists $\mathcal{V}\subseteq \mathcal{U}$ such that $|\mathcal{V}|\leq \kappa$ and $\overline{\cup\mathcal{V}}=X \}$ \cite{MR0776620}. A space is \emph{weakly {L}indel\"of} if $wl(X)=\aleph_0$.
%------------------------------------------------------------%
\begin{theorem}\label{thm: wl(X)<psi(Q_p)}
	Let $X$ be a regular space and $Y$ be a nontrivial $T_1$-space. Then $wl(X)\leq \psi(Q_p(X,Y))$.
\end{theorem}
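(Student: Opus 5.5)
Fix two distinct points $y_1,y_2\in Y$ and let $g$ be the constant function $c_{y_1}\in Q(X,Y)$. Let $\kappa=\psi(Q_p(X,Y))$, so there is a family $\gamma$ of open sets in $Q_p(X,Y)$ with $|\gamma|\le\kappa$ and $\bigcap\gamma=\{g\}$. For each $W\in\gamma$ choose a basic neighborhood $[x^W_1,\dots,x^W_{n_W};V^W_1,\dots,V^W_{n_W}]$ of $g$ contained in $W$. The intersection of these basic sets is still $\{g\}$. Put $A=\{x^W_i: W\in\gamma,\ 1\le i\le n_W\}$, so $|A|\le\kappa$. The first claim will be that $A$ is dense in $X$.

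To prove the claim, suppose $x_0\in X\setminus\overline{A}$. By regularity, Lemma~\ref{lem: qcts point-closed separation} gives an $h\in Q(X,Y)$ with $h(x_0)=y_2$ and $h(\overline{A})=\{y_1\}$. Then $h$ agrees with $g$ on $A$, so $h$ lies in every chosen basic set. Hence $h\in\bigcap\gamma=\{g\}$, which is impossible because $h(x_0)=y_2\ne y_1$. So $\overline{A}=X$. The $T_1$ hypothesis on $Y$ is used only to make $\psi$ meaningful and nontrivial; the separation itself only needs two distinct points.

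Next I would derive $wl(X)\le\kappa$. This is the main obstacle, since density of a small set does not by itself give a weakly Lindel\"of-type bound. For an arbitrary open cover $\mathcal{U}$ of $X$, I would instead rerun the argument with a function tailored to $\mathcal{U}$ and use pseudocharacter at that function. First pick, for each $x\in X$, some $U_x\in\mathcal{U}$ and an open $O_x$ with $x\in O_x\subseteq\overline{O_x}\subseteq U_x$, using regularity. Then consider the set $D$ of all $f\in Q(X,Y)$ that equal $y_1$ outside some closure of a union of members of $\mathcal{U}$. More simply, I would work with the closed set $\{f: f(X)\subseteq\{y_1,y_2\}\}$ and use the bound $\psi(Q_p(X,Y))\ge\psi$ of the point $g=c_{y_1}$ on a subspace.

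The approach I expect to work is the following. Take the $\le\kappa$ points of $A$ found above, but choose $g$ adapted to the cover, and then select a $\mathcal{U}$-member containing each point of $A$. This gives $\mathcal{V}\subseteq\mathcal{U}$ with $|\mathcal{V}|\le\kappa$ and $A\subseteq\bigcup\mathcal{V}$. Since $A$ is dense, $\overline{\bigcup\mathcal{V}}\supseteq\overline{A}=X$. This directly yields $wl(X)\le\kappa$, so the density claim in fact suffices and the detour through a tailored function is unnecessary. The only care needed is checking that the chosen basic sets really intersect to $\{g\}$ and that the function supplied by Lemma~\ref{lem: qcts point-closed separation} is constant on all of $\overline{A}$, which requires $\overline{A}$ to be nonempty and closed.
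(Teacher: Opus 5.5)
Your proof is correct, but it goes through density instead of working directly with the cover as the paper does. The paper fixes the cover $\mathcal{U}$ first. It picks a member of $\mathcal{U}$ through each of the at most $\kappa$ points occurring in the basic neighbourhoods of the constant function. It then applies Lemma~\ref{lem: qcts point-closed separation} to the closed set $\overline{\bigcup\mathcal{V}}$.

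You apply the same lemma to $\overline{A}$ before any cover appears. This gives the cover-independent inequality $d(X)\le\psi(Q_p(X,Y))$, and $wl(X)\le d(X)\le\kappa$ follows at once. The ingredients are the same: a constant function, shrinking to basic neighbourhoods, and the point--closed set separation by quasicontinuous functions.

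Your decomposition buys a genuinely stronger statement. It is the $Q_p$-analogue of one half of the classical equality $\psi(C_p(X,\mathbb{R}))=d(X)$ for Tychonoff $X$. For instance, for the one-point compactification $X=D\cup\{\infty\}$ of an uncountable discrete space (Example~\ref{exm: t(Q_p)<l(X^n) fail}), your bound gives $\psi(Q_p(X,Y))\ge|D|$, while the stated theorem only gives $\aleph_0$. Example~\ref{exm411} still shows your inequality can be strict, since $d(\mathbb{R})=\aleph_0$.

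When you write it up, delete the middle paragraph.
\begin{itemize}
\item The assertion that density of a small set ``does not by itself give a weakly Lindel\"of-type bound'' is false, and your own last paragraph refutes it.
\item The remarks about a function ``tailored to $\mathcal{U}$'', choosing $g$ ``adapted to the cover'', or restricting to $\{f: f(X)\subseteq\{y_1,y_2\}\}$ are unnecessary. They are also inconsistent with your actual proof, in which $g=c_{y_1}$ does not depend on $\mathcal{U}$.
\end{itemize}

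The nonemptiness of $\overline{A}$ that you flag is automatic for $X\neq\emptyset$. Every basic set in $\mathcal{B}$ involves at least one point. Also $\gamma\neq\emptyset$, because the whole space contains two distinct constant functions.
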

\begin{proof}
	Let $\psi(Q_p(X,Y))=\kappa$ and $\mathcal{U}$ be an open cover of $X$. We show that $\mathcal{U}$ has a subfamily $\mathcal{V}$ such that $|\mathcal{V}|\leq \kappa$ and $\overline{\cup \mathcal{V}}=X$.
	
	Let $y_0$ be a point in $Y$ and let $h_0$ be the constant function such that $h_0(x)=y_0$ for all $x\in X$. Since $\psi(Q_p(X,Y))=\kappa$, let $\gamma$ be a family of open sets in $Q_p(X,Y)$ containing $h_0$ such that $\cap \gamma=\{h_0\}$ and $|\gamma|\leq\psi(h_0,Q_p(X,Y))\leq\kappa$. Then $|\gamma|\leq \kappa$.
	
	For each $V\in \gamma$, there exists a basic open set $B_V=[x_1,\dots,x_n;U_1,\dots,U_n]$ such that $h_0\in B_V\subseteq V$. Note that $y_0\in U_i$ for each $i\in\{1,\dots,n\}$. Let $A_V=\{x_1,\dots,x_n\}$ for each $V\in\gamma$. For each $x\in A_V$, choose an open set $U_x\in\mathcal{U}$ containing $x$. Let $\mathcal{V}=\{U_x:x\in A_V,V\in\gamma\}$. Then $\mathcal{V}\subseteq\mathcal{U}$ and $|\mathcal{V}|\leq\kappa$. We claim that $\overline{\cup \mathcal{V}}=X$.
	
	Suppose that $z\in X\setminus \overline{\cup \mathcal{V}}$. Let $y_1$ be a point of $Y$ distinct from $y_0$. By Lemma~\ref{lem: qcts point-closed separation}, there exists a quasicontinuous function $h:X\to Y$ such that $h(z)=y_1$ and $h(\overline{\cup \mathcal{V}})=\{y_0\}$. Then $h\in B_V\subseteq V$ for all $V\in\gamma$ and thus $h\in\cap\gamma$. But $h\neq h_0$, which is a contradiction. Thus $\overline{\cup \mathcal{V}}=X$.
\end{proof}
%------------------------------------------------------------%

In Theorem~\ref{thm: wl(X)<psi(Q_p)} the equality, that is, $\psi(Q_p(X,Y))=wl(X)$, need not hold.
\begin{example} \label{exm411}
	Let $\mathbb{R}$ be the space of real numbers with the usual topology. Then $wl(\mathbb{R})\leq l(\mathbb{R})=\aleph_0$, while in \cite[Example 5.1.]{MR4116833}, it is shown that $\psi(Q_p(\mathbb{R},\mathbb{R}))=\mathfrak{c}$.
\end{example}

%------------------------------------------------------------%
Let $\tau$ be an infinite cardinal. For a {T}ychonoff space $X$, {A}rkhangel'skii \cite{MR1144519} proved that if $l(X^n) \le \tau$ for every $n \in \mathbb{N}$, then $t(C_p(X,\mathbb{R})) \le \tau$. Conversely, {P}ytkeev \cite{MR1144519} proved that if $t(C_p(X,\mathbb{R})) \le \tau$, then $l(X^n) \le \tau$ for every $n \in \mathbb{N}$. In particular, if $X$ is {T}ychonoff and compact, then $t(C_p(X,\mathbb{R}))=\aleph_0$.

In contrast to {A}rkhangel'skii's result, we show that the analogue of this result does not hold in the case of $Q_p(X,\mathbb{R})$. In Example~\ref{exm: t(Q_p)<l(X^n) fail}, we provide a normal, compact space $X$ for which $l(X^n)\leq\aleph_0$ for each $n\in \mathbb{N}$, but $t(Q_p(X,\mathbb{R}))>\aleph_0$. We prove an analogue of {P}ytkeev's result, for a regular space $X$ and a nontrivial $T_1$-space $Y$, if $t(Q_p(X,Y)) \le \tau$, then $l(X^n) \le \tau$ for every $n \in \mathbb{N}$.

%------------------------------------------------------------%
\begin{example} \label{exm: t(Q_p)<l(X^n) fail}
	Let $X=D\cup \{\infty\}$ be a one-point compactification of an uncountable discrete space $D$. Since $X$ is a compact Hausdorff space, using the {T}ychonoff theorem, it follows that $l(X^n)=\aleph_0$ for every $n\in \mathbb{N}$. We show that $t(Q_p(X,\mathbb{R}))>\aleph_0$.
	
	Let $D=M \cup N$, where $N$ is an infinite countable set and $M=D\setminus N$. For each finite set $F \subseteq M$, let us define $g_F:X\to \mathbb{R}$ such that
	\[ g_F(x)=
	\begin{cases}
		1&\text{if $x\in F$},\\
		0&\text{if $x\in X\setminus F$}.
	\end{cases}
	\]
	Also define $h:X \to \mathbb{R}$ by
	\[ h(x)=
	\begin{cases}
		1 & \text{if $x \in M$,}\\
		0 & \text{if $x \in N \cup \{\infty\}$}.\\
	\end{cases}
	\]
	Since every point of $D$ is isolated, $g_F$ is continuous at each point of $D$. At the point $\infty$, $g_F(\infty)=0$. For any neighborhood $V$ of $0$, $X\setminus F$ is an open neighborhood of $\infty$ such that $g_F(X\setminus F)=\{0\}$. Thus $g_F$ is continuous. Again, since every point of $D$ is isolated, $h$ is continuous at each point of $D$ and hence quasicontinuous at each point of $D$. At the point $\infty$, $h(\infty)=0$. Let $U$ be any neighborhood of $\infty$ in $X$ and  $V$ be neighborhood of $0$ in $\mathbb{R}$. Since $U$ contains all but finitely many points of $D$, $N \cap U \ne \emptyset$. So we can choose $n \in N \cap U$, so that $\{n\}$ is a nonempty open subset of $X$ contained in $U$ and $h(\{n\})=\{0\} \subseteq V$. Thus, $h$ is quasicontinuous.
	
	Let $A=\{g_F:F$ is finite and $F\subseteq M \}$. To show $h \in \overline{A}$ in $Q_p(X,\mathbb{R})$, let $W=[x_1;U_1] \cap \cdots \cap [x_m;U_m]$ be a basic neighborhood of $h$ in $Q_p(X,\mathbb{R})$. Let $K=\{x_1,\dots,x_m\}$ and $F_0=K\cap M$, so $g_{F_0} \in A$. For each $x_i\in F_0$, $g_{F_0}(x_i)=1=h(x_i)$ and for each $x_i\in N\cup \{\infty\}$, $g_{F_0}(x_i)=0=h(x_i)$.  Thus $g_{F_0} \in W$ and hence $h \in \overline{A}$.
	
	Suppose that $B \subseteq A$ is countable and thus $B=\{g_{F_n}:n \in \mathbb{N}\}$, where each $F_n$ is a finite subset of $M$. Let $S=\cup_{n \in \mathbb{N}} F_n$. Then $S$ is countable. Since $M$ is uncountable, we can choose $m_0 \in M \setminus S$. Then $U=[m_0;(\frac{1}{2},\frac{3}{2})]$ is a basic neighborhood of $h$ in $Q_p(X,\mathbb{R})$. Since for every $n \in \mathbb{N}$, $m_0 \notin F_n$, we have $g_{F_n}(m_0)=0$. Then $g_{F_n} \notin U$ and thus $B \cap U=\emptyset$. Hence $h \notin \overline{B}$. Therefore $h \in \overline{A}$, but $h$ does not belong to the closure of any countable subset of $A$. It follows that $t(Q_p(X,\mathbb{R}))>\aleph_0$.
\end{example}
%------------------------------------------------------------%
\begin{theorem} \label{thm: l(X^n)<t(Q_p)}
	Let $X$ be a regular space and $Y$ be a nontrivial $T_1$-space. Let $\tau$ be an infinite cardinal number. If $t(Q_p(X,Y)) \le \tau$, then $l(X^n) \le \tau$ for every $n \in \mathbb{N}$.
\end{theorem}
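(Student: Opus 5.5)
The plan is to run a Pytkeev-type argument directly: from an arbitrary open cover of $X^n$, build a set $A\subseteq Q(X,Y)$ of ``indicator-like'' quasicontinuous functions that has a constant function in its closure. Tightness then extracts a subfamily of size at most $\tau$, and that subfamily will index a subcover of size at most $\tau$. Fix $n\in\mathbb{N}$ and an open cover $\mathcal{U}$ of $X^n$. Since $Y$ is nontrivial, fix distinct $y_0,y_1\in Y$, and let $h_0$ be the constant function with value $y_0$.

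\emph{Step 1 (construction).} For each nonempty finite set $K\subseteq X$, the set $K^n$ is finite, so it is covered by a finite subfamily $\mathcal{U}_K\subseteq\mathcal{U}$. For each point $p=(p_1,\dots,p_n)\in K^n$, choose a basic box $O^p_1\times\dots\times O^p_n$ containing $p$ and contained in some member of $\mathcal{U}_K$. For $x\in K$, let $W_K$ be the union over $x\in K$ of the open sets
\[
W_x=\bigcap\{O^p_i : p\in K^n,\ 1\le i\le n,\ p_i=x\}.
\]
This is a finite intersection of open sets containing $x$. If $(z_1,\dots,z_n)\in W_K^n$ with $z_i\in W_{x_i}$, then for $p=(x_1,\dots,x_n)$ we get $z_i\in O^p_i$ for each $i$. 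Hence $W_K^n\subseteq\bigcup\mathcal{U}_K$, and $K\subseteq W_K$. Because $X$ is regular and $K$ is finite, we can choose an open $V_K$ with $K\subseteq V_K\subseteq\overline{V_K}\subseteq W_K$; alternatively, Lemma~\ref{lem: qcts compact-closed separation} applied to $K$ and $X\setminus W_K$ does the same job. Define $f_K=y_0$ on $\overline{V_K}$ and $f_K=y_1$ on $X\setminus\overline{V_K}$. Each point of $\overline{V_K}$ lies in the closure of the open set $V_K$, on which $f_K$ is constant, and each point outside $\overline{V_K}$ lies in an open set on which $f_K$ is constant. So $f_K$ is quasicontinuous by Lemma~\ref{lem: sufficient_qcts}. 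Put $A=\{f_K: K\subseteq X \text{ finite nonempty}\}$.

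\emph{Step 2 ($h_0\in\overline{A}$).} Any basic neighbourhood $[x_1,\dots,x_m;V_1,\dots,V_m]$ of $h_0$ has $y_0\in V_i$ for every $i$. For $E=\{x_1,\dots,x_m\}$ we have $f_E(x_i)=y_0$, so $f_E$ lies in that neighbourhood.

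\emph{Step 3 (tightness and covering).} By $t(Q_p(X,Y))\le\tau$ there is $B\subseteq A$ with $|B|\le\tau$ and $h_0\in\overline{B}$. Set $\mathcal{U}'=\bigcup\{\mathcal{U}_K: f_K\in B\}$; since each $\mathcal{U}_K$ is finite, $|\mathcal{U}'|\le\tau$. Given $(x_1,\dots,x_n)\in X^n$, the set $O=Y\setminus\{y_1\}$ is open because $Y$ is $T_1$, and $[x_1,\dots,x_n;O,\dots,O]$ is a neighbourhood of $h_0$. So it contains some $f_K\in B$. Then $f_K(x_i)\ne y_1$ forces $x_i\in\overline{V_K}\subseteq W_K$ for every $i$. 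Hence $(x_1,\dots,x_n)\in W_K^n\subseteq\bigcup\mathcal{U}_K\subseteq\bigcup\mathcal{U}'$. Thus $\mathcal{U}'$ is a subcover of size at most $\tau$, and $l(X^n)\le\tau$.

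The main obstacle is Step 1: producing a single open neighbourhood $W_K$ of $K$ whose $n$-th power lies inside finitely many members of $\mathcal{U}$. This is the point where the finiteness of $K$ (a tube-lemma type argument) must be combined with the regularity-based closure shrinking. Without that shrinking, the two-valued function would fail to be quasicontinuous and, at the same time, to track $W_K$. The construction above handles both requirements at once, and the remaining steps are routine.
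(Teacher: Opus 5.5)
Your argument is correct and is essentially the paper's Pytkeev-type proof. Your single open set $W_K$ with $W_K^n\subseteq\bigcup\mathcal{U}_K$ plays the role of $\bigcup\mu_K$ for the paper's $\gamma$-small family $\mu_K$, and your explicit two-valued $f_K$, built via regularity exactly as in Lemma~\ref{lem: qcts compact-closed separation}, replaces the paper's larger sets $A_\mu$.

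One bookkeeping point in Step 3 needs fixing. Since $K\mapsto f_K$ need not be injective, the set $\{K: f_K\in B\}$ can have more than $\tau$ members, so the bound $|\mathcal{U}'|\le\tau$ does not follow as written. Instead, choose for each $f\in B$ a single $K(f)$ with $f=f_{K(f)}$, and put $\mathcal{U}'=\bigcup_{f\in B}\mathcal{U}_{K(f)}$, as the paper does with $\mu(f)$. Your covering argument then goes through verbatim.
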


\begin{proof}
	Let $n \in \mathbb{N}$ be fixed and $\gamma$ be an open cover of $X^n$. Suppose that $y_1$ and $y_2$ are any two distinct points in $Y$ and $V$ is an open subset of $Y$ such that $y_1 \in V$ and $y_2 \notin V$. Let $c_{y_1}$ be the constant function from $X$ to $Y$ such that $c_{y_1}(x)=y_1$ for all $x\in X$. A finite family $\mu$ of open subsets of $X$ is $\gamma$-small if for every ordered $n$-tuple ($V_1,\dots,V_n) \in \mu^n$, there exists $G \in \gamma$ such that $V_1 \times \cdots \times V_n \subseteq G$. Let $\mathcal{E}$ be the family of all finite $\gamma$-small families of open subsets of $X$. For each $\mu \in \mathcal{E}$, denote $A_\mu=\{f \in Q(X,Y): f(x)=y_2 \text{ for all } x \in (X \setminus \cup \mu)\}$, and let $A=\cup_{\mu \in \mathcal{E}} A_\mu$. We claim that the constant function $c_{y_1}$ belongs to $\overline{A}$.
	
	Let $W=[x_1;U_1] \cap \cdots \cap [x_m;U_m]$ be any basic neighborhood of $c_{y_1}$ in $Q_p(X,Y)$. Let $K=\{x_1,\dots,x_m\}$. Since $K^n$ is finite, for each $\xi=(z_1,\dots,z_n) \in K^n$, we can choose $G_\xi \in \gamma$ such that $\xi \in G_\xi$. Then there exist open sets $V_1^\xi,\dots,V_n^\xi \subseteq X$ such that $z_i \in V_i^\xi$ for each $i$ and $V_1^\xi \times \cdots \times V_n^\xi \subseteq G_\xi$. Let $\theta_K$ be the finite family of all such sets $V_i^\xi$. For each $x \in K$, let $W_x=\cap \{O \in \theta_K : x \in O\}$, and let $\mu_K=\{W_x : x \in K\}$. Since $K$ is finite, each $W_x$ is open and $K \subseteq \cup \mu_K$. Also $\mu_K$ is $\gamma$-small, because if $W_{x_1'},\dots,W_{x_n'} \in \mu_K$, then for $\eta=(x_1',\dots,x_n') \in K^n$ there exist $O_1,\dots,O_n \in \theta_K$ and $G \in \gamma$ such that $x_i' \in O_i$ for each $i$ and $O_1 \times \cdots \times O_n \subseteq G$, while $W_{x_i'} \subseteq O_i$ for each $i$, hence $W_{x_1'} \times \cdots \times W_{x_n'} \subseteq G$. Thus $\mu_K \in \mathcal{E}$.
	
	Let $F=(X \setminus \cup \mu_K)$. If $F=\emptyset$, then $c_{y_1} \in A_{\mu_K} \cap W$. Assume that $F \ne \emptyset$. Then $F$ is closed and $x_i \notin F$ for each $i$, so by Lemma~\ref{lem: qcts compact-closed separation} there exists a quasicontinuous function $g:X \to Y$ such that $g(x_i)=y_1$ for each $i \in \{1,\dots,m\}$ and $g(F)=\{y_2\}$. Hence $g \in A_{\mu_K}\cap W$. Thus $c_{y_1} \in \overline{A}$.
	
	Since $t(Q_p(X,Y)) \le \tau$, there exists $B \subseteq A$ such that $|B| \le \tau$ and $c_{y_1} \in \overline{B}$. For each $f \in B$, choose $\mu(f) \in \mathcal{E}$ with $f \in A_{\mu(f)}$ and let $\mathcal{E}_0=\{\mu(f): f \in B\}$. Then $|\mathcal{E}_0| \le \tau$ and $B\subseteq \cup_{\mu\in\mathcal{E}_0}A_\mu$. For each $\mu \in \mathcal{E}_0$ and each $\xi=(V_1,\dots,V_n) \in \mu^n$, choose $G_\xi \in \gamma$ such that $V_1 \times \cdots \times V_n \subseteq G_\xi$. Let $\gamma_\mu=\{G_\xi : \xi \in \mu^n\}$ and $\widetilde{\gamma}=\cup_{\mu \in \mathcal{E}_0} \gamma_\mu$. Since $|\mathcal{E}_0| \le \tau$ and $\gamma_\mu$ is finite for every $\mu\in \mathcal{E}_0$, we have $|\widetilde{\gamma}| \le \tau$. 
	
	We show that $\widetilde{\gamma}$ covers $X^n$. Let $(x_1,\dots,x_n) \in X^n$ be arbitrary. Consider the basic neighborhood $O=[x_1;V] \cap \cdots \cap [x_n;V]$ of $c_{y_1}$. Since $c_{y_1} \in \overline{B}$, there exists $f \in B \cap O$. Let $\mu_0 \in \mathcal{E}_0$ be such that $f \in A_{\mu_0}$. Then $f(x_i) \in V$, for each $i\in\{1,\dots,n\}$ and $f(x)=y_2$, for all $x \in (X \setminus \cup \mu_0)$. Since $y_2 \notin V$, it follows that $x_i \in \cup \mu_0$ for each $i$. Thus, we can choose $V_i \in \mu_0$ such that $x_i \in V_i$ for each $i$. Since $\mu_0$ is $\gamma$-small, there exists $G \in \gamma_{\mu_0} \subseteq \widetilde{\gamma}$ such that $V_1 \times \cdots \times V_n \subseteq G$. Hence, $(x_1,\dots,x_n) \in G$. Thus $\widetilde{\gamma}$ is a subcover of $\gamma$ of cardinality less than or equal to $\tau$. This implies $l(X^n) \le \tau$. Since $n$ was arbitrary, $l(X^n) \le \tau$ for every $n \in \mathbb{N}$.
\end{proof}
%------------------------------------------------------------%
\begin{corollary} \label{cor: tightness Q_p_lindelof X}
Let $X$ be a regular space and $Y$ be a nontrivial $T_1$-space. If $t(Q_p(X,Y))=\aleph_0$ then $X$ is a {L}indel\"of space.
\end{corollary}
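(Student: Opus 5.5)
This is an immediate specialization of Theorem~\ref{thm: l(X^n)<t(Q_p)}. We take $\tau=\aleph_0$, which is an infinite cardinal, so the theorem applies under exactly the stated hypotheses: $X$ regular and $Y$ a nontrivial $T_1$-space.

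The assumption $t(Q_p(X,Y))=\aleph_0$ gives in particular $t(Q_p(X,Y))\le\aleph_0$. Theorem~\ref{thm: l(X^n)<t(Q_p)} then yields $l(X^n)\le\aleph_0$ for every $n\in\mathbb{N}$. Taking $n=1$ gives $l(X)\le\aleph_0$. Since the Lindel\"of number is defined with the summand $\aleph_0$, this means $l(X)=\aleph_0$, that is, $X$ is Lindel\"of by the convention recalled after the definition of $l(X)$.

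There is no real obstacle here. The only point to check is that the hypotheses of Theorem~\ref{thm: l(X^n)<t(Q_p)} are exactly those of the corollary, which they are. As a side remark, the same argument shows more: every finite power $X^n$ is Lindel\"of.
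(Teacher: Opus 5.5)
Your proof is correct and takes the same route as the paper, which states this corollary without separate proof as the case $\tau=\aleph_0$, $n=1$ of Theorem~\ref{thm: l(X^n)<t(Q_p)}. Your observation that $l(X)\le\aleph_0$ forces $l(X)=\aleph_0$ because of the $\aleph_0$ summand in the definition of $l$ is the only point to check.
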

%------------------------------------------------------------%
The equality in Theorem~\ref{thm: l(X^n)<t(Q_p)} need not hold.
\begin{example}\label{exm: t(Q_p(X)>l(X)}
Let $X$ be the {S}orgenfrey line. Since $X$ is {L}indel\"of, $l(X)=\aleph_0$. We claim that $t(Q_p(X,\mathbb{R}))>\aleph_0$. Note that from \cite[Problem 149]{MR3024898}, we have $l(X^n)\leq t(C_p(X,\mathbb{R}))$ for all $n\in\mathbb{N}$ and using \cite[Problem 159]{MR3024898}, we have $t(C_p(X,\mathbb{R}))\leq t(Q_p(X,\mathbb{R}))$. Since $X^2$ is not {L}indel\"of, it follows that $t(Q_p(X,\mathbb{R}))\geq t(C_p(X,\mathbb{R}))\geq l(X^2)>\aleph_0$.
\end{example}
%------------------------------------------------------------%
\begin{remark}\label{rem: Tychonoff l(X)<t(Q_p)}
When $X$ is a {T}ychonoff space, $l(X^n)\leq t(C_p(X,\mathbb{R}))$ \cite[Problem 149]{MR3024898}, and $t(C_p(X,\mathbb{R}))\leq t(Q_p(X,\mathbb{R}))$ \cite[Problem 159]{MR3024898}. Thus it is immediate that for a {T}ychonoff space $X$, $l(X^n)\leq t(Q_p(X,\mathbb{R}))$ holds. Also from the Corollary~\ref{cor: tightness Q_p_lindelof X}, it follows that a regular $T_1$-space $X$ with countable tightness of $Q_p(X,\mathbb{R})$ is a {L}indel\"of space and hence a {T}ychonoff space.
\end{remark}

%%%%%%%%%%%%%%%%%%%%%%%%%%%%%%%%%%%%%%%%%%%%%%%%%%%%%%%%%%%%%%
% Section: Cardinal bounds involving {L}indel\"of number and spread of $Q_p(X,Y)$
%%%%%%%%%%%%%%%%%%%%%%%%%%%%%%%%%%%%%%%%%%%%%%%%%%%%%%%%%%%%%%
\section{Cardinal bounds involving {L}indel\"of number and spread of $Q_p(X,Y)$} \label{sec4}
In this section, we obtain results which give bounds for {L}indel\"of number, hereditary {L}indel\"of number, hereditary density and spread of $Q_p(X,Y)$. Using these results, we obtain that $hl(Q_p(\mathbb{R},\mathbb{R}))=hd(Q_p(\mathbb{R},\mathbb{R}))=s(Q_p(\mathbb{R},\mathbb{R}))=\mathfrak{c}$.

%------------------------------------------------------------%
The \emph{hereditary {L}indel\"of number} of a space $X$ is defined by $hl(X)=\sup\{l(Y):Y\subseteq X\}$, where $l(Y)$ is the {L}indel\"of number of the subspace $Y$ of $X$ \cite{MR3024898}. A space $X$ is called \emph{hereditarily $\kappa$-{L}indel\"of} if $hl(X)\leq\kappa$. A space $X$ is \emph{hereditarily {L}indel\"of} if $hl(X)=\aleph_0$ \cite{MR1039321}.

The density of a space $X$ is defined as $d(X) =\aleph_0+ \min\{|D|: D \subseteq X \text{ and } \overline{D} = X\}$.

The \emph{hereditary density} of a space $X$ is defined by $hd(X)=\sup\{d(Y):Y\subseteq X\}$, where $d(Y)$ is the density of the subspace $Y$ of $X$ \cite{MR3024898}. A space $X$ is called \emph{hereditarily $\kappa$-separable} if $hd(X)\leq\kappa$. A space $X$ is \emph{hereditarily separable} if $hd(X)=\aleph_0$ \cite{MR1039321}.
%------------------------------------------------------------%
\begin{theorem}\label{thm: hl(X)<hd(Q_p)}
Let $X$ be a regular space and $Y$ be a nontrivial $T_1$-space. Let $\kappa$ be an infinite cardinal. If $hd(Q_p(X,Y))\leq \kappa$ then $hl(X)\leq \kappa$.
\end{theorem}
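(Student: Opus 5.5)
We argue by contraposition, using the standard characterization of hereditary Lindelöf number by right-separated subspaces. Suppose $hl(X)>\kappa$. Then $X$ contains a right-separated subspace of cardinality $\kappa^+$. That is, there is a family $\{x_\alpha:\alpha<\kappa^+\}$ of distinct points and open sets $U_\alpha\ni x_\alpha$ such that $x_\beta\notin U_\alpha$ for all $\beta>\alpha$. (Equivalently, some subspace $Z\subseteq X$ has an open cover with no subcover of size $\le\kappa$, and one extracts such a sequence by transfinite recursion.) We will build a left-separated subspace of $Q_p(X,Y)$ of size $\kappa^+$. Such a subspace has density $\kappa^+$, which contradicts $hd(Q_p(X,Y))\le\kappa$.

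The construction runs as follows. Fix distinct $y_1,y_2\in Y$ and, since $Y$ is $T_1$, an open $V\subseteq Y$ with $y_1\in V$ and $y_2\notin V$. By regularity, for each $\alpha$ choose an open $W_\alpha$ with $x_\alpha\in W_\alpha\subseteq\overline{W_\alpha}\subseteq U_\alpha$. Define $f_\alpha:X\to Y$ by $f_\alpha(z)=y_1$ for $z\in\overline{W_\alpha}$ and $f_\alpha(z)=y_2$ otherwise. By Lemma~\ref{lem: sufficient_qcts}, $f_\alpha$ is quasicontinuous, exactly as in the proof of Lemma~\ref{lem: qcts compact-closed separation}. (Alternatively, apply Lemma~\ref{lem: qcts point-closed separation} directly with the closed set $X\setminus U_\alpha$, taking care of the case where it is empty.) Then $f_\alpha(x_\alpha)=y_1$, and $f_\alpha(x_\beta)=y_2$ for every $\beta>\alpha$, since $x_\beta\notin U_\alpha\supseteq\overline{W_\alpha}$.

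Next, consider the neighborhood $O_\beta=[x_\beta;V]$ of $f_\beta$. For $\alpha<\beta$ we have $f_\alpha(x_\beta)=y_2\notin V$, so $f_\alpha\notin O_\beta$. Thus $f_\beta\notin\overline{\{f_\alpha:\alpha<\beta\}}$, and the family $S=\{f_\alpha:\alpha<\kappa^+\}$ is left-separated in its natural order; in particular, the $f_\alpha$ are distinct. Any subset $D\subseteq S$ with $|D|\le\kappa$ is bounded by some $\beta<\kappa^+$, because $\kappa^+$ is regular. Then $D\subseteq\{f_\alpha:\alpha<\beta\}$, so $f_\beta\notin\overline{D}$. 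Hence $d(S)\ge\kappa^+$, giving $hd(Q_p(X,Y))\ge\kappa^+$, a contradiction.

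The main obstacle is the first step: justifying that $hl(X)>\kappa$ yields a right-separated family of length $\kappa^+$, with the "separating" open sets indexed so that later points avoid earlier neighborhoods. I would prove this directly. Take $Z\subseteq X$ and an open cover $\mathcal{U}$ of $Z$ with no subfamily of size $\le\kappa$ covering $Z$. Recursively pick $x_\alpha\in Z\setminus\bigcup_{\gamma<\alpha}U_\gamma$, where $U_\gamma\in\mathcal{U}$ contains $x_\gamma$. At each stage $\alpha<\kappa^+$, fewer than $\kappa^+$ sets have been chosen, so they cannot cover $Z$, and the recursion continues. This ensures $x_\beta\notin U_\alpha$ for $\alpha<\beta$, as needed. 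All the remaining steps are routine.
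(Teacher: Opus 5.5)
Your proof is correct, but it takes a different route from the paper's.

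\textbf{The paper's argument.} It proceeds in one step. It takes $Z\subseteq X$ and a cover $\mathcal{U}$ with no subfamily of size $\le\kappa$ covering $Z$. To \emph{every} $z\in Z$ it attaches a function $f_z$ with $f_z(z)=y_1$ and $f_z\equiv y_0$ off $U_z$. Suppose $D$ were a dense subset of $F=\{f_z:z\in Z\}$ with $|D|\le\kappa$. Then each nonempty open set $[z;V]\cap F$ meets $D$, which forces $z\in U_{z'}$ for some $f_{z'}\in D$. So $\{U_{z'}:f_{z'}\in D\}$ would be a subcover of $Z$ of size $\le\kappa$, a contradiction.

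\textbf{Your argument.} You first thin $Z$ by transfinite recursion to a right-separated sequence $\{x_\alpha:\alpha<\kappa^+\}$. You then use the same kind of two-valued quasicontinuous functions to turn it into a left-separated sequence $\{f_\alpha:\alpha<\kappa^+\}$ in $Q_p(X,Y)$. Finally, the regularity of $\kappa^+$ shows that no subset of size $\le\kappa$ is dense.

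\textbf{Comparison.} The paper's argument is shorter and needs no ordinals or cofinality considerations; it reads the subcover directly off a small dense set. Your route costs a recursion but gives more structural information. It shows that any right-separated subspace of $X$ transfers to a left-separated subspace of $Q_p(X,Y)$ of the same order type. This is in the spirit of the right-/left-separated duality familiar from $C_p$-theory.

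A small side benefit: you define $f_\alpha$ explicitly via $\overline{W_\alpha}$ and Lemma~\ref{lem: sufficient_qcts}. This sidesteps the nonemptiness hypothesis of Lemma~\ref{lem: qcts point-closed separation}. The paper's proof applies that lemma to the possibly empty closed set $X\setminus U_z$ without comment.
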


\begin{proof}
Suppose that $hl(X)>\kappa$, that is, $X$ is not hereditarily $\kappa$-{L}indel\"of. Then there exists a subspace $Z$ of  $X$ such that $l(Z)>\kappa$. Let $\mathcal{U}$ be an open cover of $X$ having no subfamily of cardinality less than or equal to $\kappa$ which cover $Z$. For each $z\in Z$, choose $U_z\in\mathcal{U}$ such that $z\in U_z$. Let $y_0$ and $y_1$ be any two distinct points in $Y$. Since $X$ is regular and $Y$ is a nontrivial $T_1$-space, by Lemma~\ref{lem: qcts point-closed separation} there exists a quasicontinuous function $f_z:X\to Y$ such that $f_z(X\setminus U_z)=\{y_0\}$ and $f_z(z)=y_1$. Consider $F=\{f_z:z\in Z\}$. We claim that for $F\subseteq Q_p(X,Y)$, $d(F) > \kappa$.

Let $D$ be a dense subset of $F$ with $|D|\leq \kappa$. Let $V$ be an open neighborhood of $y_1$ in $Y$ such that $y_0\notin V$. For each $z\in Z$, we have $([z;V]\cap F)\cap D\neq\emptyset$. Thus there exists $f_{z'}\in D$ such that $f_{z'}(z)\in V$. Since $f_{z'}(X\setminus U_{z'})=\{y_0\}$, $z\in U_{z'}$. Therefore $\{U_{z'}: f_{z'} \in D\}$ is a subfamily of $\mathcal{U}$ of cardinality $\leq \kappa$ which covers $Z$, which is a contradiction to the fact that $\mathcal{U}$ has no subfamily of $\mathcal{U}$ of cardinality $\leq \kappa$ which covers $Z$.
Thus $d(F) > \kappa$. Since $F \subseteq Q_p(X,Y)$, this implies $hd(Q_p(X,Y)) > \kappa$, which contradicts the hypothesis. Thus $hl(X)\leq \kappa$.
\end{proof}
%------------------------------------------------------------%
\begin{corollary}\label{cor: hs_Q_p implies X hl}
Let $X$ be a regular space and $Y$ be a nontrivial $T_1$-space. If $Q_p(X,Y)$ is hereditarily separable then $X$ is hereditarily {L}indel\"of.
\end{corollary}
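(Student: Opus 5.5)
This is the special case $\kappa=\aleph_0$ of Theorem~\ref{thm: hl(X)<hd(Q_p)}, so I would simply unwind the definitions and invoke that theorem. I will first translate the hypothesis. A space is hereditarily separable precisely when $hd(Q_p(X,Y))=\aleph_0$. Since the density of every subspace is at least $\aleph_0$ by definition, this is the same as $hd(Q_p(X,Y))\leq\aleph_0$.

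I then apply Theorem~\ref{thm: hl(X)<hd(Q_p)} with the infinite cardinal $\kappa=\aleph_0$. Its hypotheses are that $X$ is regular and $Y$ is a nontrivial $T_1$-space, and both are exactly what the corollary assumes. The theorem gives $hl(X)\leq\aleph_0$.

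Finally, every Lindel\"of number includes the summand $\aleph_0$, so $hl(X)\geq\aleph_0$ and hence $hl(X)=\aleph_0$. That is, $X$ is hereditarily Lindel\"of.

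There is no real obstacle here. The only point needing care is the bookkeeping with the $\aleph_0+$ convention in the definitions of $d$ and $l$. That convention is what makes ``hereditarily separable'' and ``$hd\leq\aleph_0$'' coincide, and likewise ``hereditarily Lindel\"of'' and ``$hl\leq\aleph_0$''.
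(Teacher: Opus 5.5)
Your proposal is correct and takes the same approach as the paper. The paper gives no separate proof: the corollary is exactly the case $\kappa=\aleph_0$ of Theorem~\ref{thm: hl(X)<hd(Q_p)}. Your use of the $\aleph_0+$ convention, so that ``hereditarily separable'' means $hd\leq\aleph_0$ and ``hereditarily Lindel\"of'' means $hl\leq\aleph_0$, is the only bookkeeping needed.
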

%------------------------------------------------------------%
\begin{theorem}\label{thm: hd(X)<hl(Q_p)}
	Let $X$ be a regular space and $Y$ be a nontrivial $T_1$-space. Let $\kappa$ be an infinite cardinal. If $hl(Q_p(X,Y)) \leq \kappa$ then $hd(X) \leq \kappa$.
\end{theorem}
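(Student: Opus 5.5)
We argue by contraposition, in the same spirit as the proof of Theorem~\ref{thm: hl(X)<hd(Q_p)}, but with the roles of density and Lindel\"of number exchanged. Suppose $hd(X)>\kappa$, so there is a subspace $Z\subseteq X$ with $d(Z)>\kappa$. The plan is to build a subspace $F\subseteq Q_p(X,Y)$ with $l(F)>\kappa$. This contradicts $hl(Q_p(X,Y))\le\kappa$.

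The standard fact I will use is that a space of density $>\kappa$ contains a left-separated subspace of cardinality $\kappa^+$. Concretely, I build by transfinite recursion points $z_\alpha\in Z$ for $\alpha<\kappa^+$ with $z_\alpha\notin\overline{\{z_\beta:\beta<\alpha\}}$. The recursion never stops, because at each stage $|\{z_\beta:\beta<\alpha\}|\le\kappa<d(Z)$, so this set is not dense in $Z$. I then choose $z_\alpha$ in $Z$ outside its closure. Put $C_\alpha=\overline{\{z_\beta:\beta<\alpha\}}$, a closed subset of $X$ not containing $z_\alpha$.

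Next, fix distinct $y_0,y_1\in Y$ and an open $V\ni y_1$ with $y_0\notin V$, which exists since $Y$ is $T_1$. By Lemma~\ref{lem: qcts point-closed separation}, for each $\alpha$ there is $f_\alpha\in Q(X,Y)$ with $f_\alpha(z_\alpha)=y_1$ and $f_\alpha(C_\alpha)=\{y_0\}$. When $C_\alpha=\emptyset$, that is, $\alpha=0$, I take $f_0$ to be the constant $y_1$. Let $F=\{f_\alpha:\alpha<\kappa^+\}$. The open sets $W_\alpha=[z_\alpha;V]\cap F$ cover $F$, since $f_\alpha\in W_\alpha$.

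The key check is that $f_\gamma\in W_\alpha$ forces $\gamma\le\alpha$. Indeed, if $\gamma>\alpha$, then $z_\alpha\in C_\gamma$, so $f_\gamma(z_\alpha)=y_0\notin V$. Therefore any subfamily $\{W_\alpha:\alpha\in S\}$ with $|S|\le\kappa$ has $S$ bounded in $\kappa^+$ by regularity of $\kappa^+$, say by $\delta$. Such a subfamily then misses $f_{\delta+1}$. Hence $l(F)>\kappa$, so $hl(Q_p(X,Y))>\kappa$, a contradiction.

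The points needing care are the following. The recursion must produce $\kappa^+$ distinct points; this is automatic because $z_\alpha\notin C_\alpha$. We also need the regularity of the successor cardinal $\kappa^+$, which is fine in ZFC. Finally, the empty-$C_0$ case in the lemma must be handled, which the constant function does. I expect the left-separated construction to be the only nonroutine step.
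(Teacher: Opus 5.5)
Your proof is correct. It reaches the same kind of contradiction as the paper: a subspace of $Q_p(X,Y)$ with Lindel\"of number $>\kappa$, built via Lemma~\ref{lem: qcts point-closed separation} and covered by sets of the form $[z;V]$. The combinatorial core, however, is different.

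The paper builds no left-separated sequence. For \emph{every} $A\subseteq Z$ with $|A|\le\kappa$ it picks $z_A\in Z\setminus\overline{A}$ and a function $f_A$ with $f_A(\overline{A})=\{y_0\}$ and $f_A(z_A)=y_1$. It then covers $W=\{f_A\}$ by the sets $[z_A;V]\cap W$. Any subfamily indexed by $A_\alpha$ ($\alpha\in I$, $|I|\le\kappa$) is defeated by the single function $f_B$ with $B=\bigcup_{\alpha\in I}(A_\alpha\cup\{z_{A_\alpha}\})$. In that argument, the index family of all subsets of $Z$ of size at most $\kappa$ is closed under unions of at most $\kappa$ members. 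That closure property does the work that your transfinite recursion and the regularity of $\kappa^+$ do, so the paper needs no ordinals at all.

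Your version buys a sharper witness: a right-separated subspace $\{f_\alpha:\alpha<\kappa^+\}$ of cardinality exactly $\kappa^+$. This makes visible the classical duality familiar from the $C_p$ setting, where a left-separated subspace of $X$ yields a right-separated subspace of the function space. The paper's $W$, by contrast, is indexed by all small subsets of $Z$.

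You also explicitly handle the degenerate case $C_0=\emptyset$, where Lemma~\ref{lem: qcts point-closed separation} (stated for nonempty closed sets) does not apply. The paper passes over the analogous case $A=\emptyset$ without comment.
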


\begin{proof}
Suppose that $hd(X)>\kappa$, that is, $X$ is not hereditarily $\kappa$-separable. Let $Z$ be a subspace of $X$ such that $d(Z) > \kappa$. Let $A$ be a subset of $Z$ with $|A| \leq \kappa$. Since $A$ cannot be dense in $Z$, there exists $z_A\in Z$ such that $z_A\notin \overline{A_Z}$. Then $z_A\notin \overline{A}$. Let $y_0$ and $y_1$ be any two distinct points in $Y$. Since $X$ is regular, for each subset $A$ of $Z$ with $|A| \leq \kappa$, by Lemma~\ref{lem: qcts point-closed separation}, there exists a quasicontinuous function $f_A:X\to Y$ such that $f_A(\overline{A})=\{y_0\}$ and $f_A(z_A)=y_1$. Let $W=\{f_A:A\subseteq Z, |A| \leq \kappa\}$. We claim that $W$ is not a $\kappa$-{L}indel\"of subspace of the space $Q_p(X,Y)$.
	
Let $V$ be an open neighborhood of $y_1$ in $Y$ such that $y_0\notin V$. For each $f_A\in W$, let $U_A=[z_A;V]\cap W$ which is open in $W$ and contains $f_A$. Let $\mathcal{U}=\{U_A:f_A\in W\}$. Then $\mathcal{U}$ is an open cover of $W$. We will show that $\mathcal{U}$ has no subcover of cardinality less than or equal to $\kappa$ for $W$.
	
Let $\{U_{A_\alpha}:\alpha \in I\}$ be a subcollection of $\mathcal{U}$ where $|I| \leq \kappa$. Let $B=\bigcup_{\alpha \in I} (A_\alpha \cup \{z_{A_\alpha}\})$. Since $|A_\alpha| \leq \kappa$ for each $\alpha$ and $|I| \leq \kappa$, we have $|B| \leq \kappa$. Thus $B$ is a subset of $Z$ with $|B| \leq \kappa$. Then there exists $z_B\in Z$ such that $z_B\notin \overline{B}$. Thus again by Lemma~\ref{lem: qcts point-closed separation}, there is a quasicontinuous function $f_B:X\to Y$ such that $f_B(\overline{B})=\{y_0\}$ and $f_B(z_B)=y_1$. Note that for each $\alpha \in I$, $z_{A_\alpha}\in B$, so that $f_B(z_{A_\alpha})=y_0$. This shows that $f_B\notin U_{A_\alpha}$ for all $\alpha \in I$. Since $f_B\in W$, this contradicts the fact that $\{U_{A_\alpha}:\alpha \in I\}$ is a subcover of $W$. Thus $hd(X) \leq \kappa$.
\end{proof}
%------------------------------------------------------------%
\begin{corollary}\label{cor: hl_Q_p implies X hs}
Let $X$ be a regular space and $Y$ be a nontrivial $T_1$-space. If $Q_p(X,Y)$ is hereditarily {L}indel\"of then $X$ is hereditarily separable.
\end{corollary}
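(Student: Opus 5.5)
The corollary follows directly from Theorem~\ref{thm: hd(X)<hl(Q_p)} by specializing the cardinal to $\kappa=\aleph_0$. The plan is to check the hypotheses, apply the theorem, and translate back through the definitions.

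First, assume $Q_p(X,Y)$ is hereditarily Lindel\"of, with $X$ regular and $Y$ a nontrivial $T_1$-space as in the statement. By the definition recalled at the start of Section~\ref{sec4}, this means $hl(Q_p(X,Y))=\aleph_0$. In particular $hl(Q_p(X,Y))\le\aleph_0$, so the hypothesis of Theorem~\ref{thm: hd(X)<hl(Q_p)} holds with the infinite cardinal $\kappa=\aleph_0$.

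That theorem then gives $hd(X)\le\aleph_0$. Since $hd$ always includes the summand $\aleph_0$ through the density, we get $hd(X)=\aleph_0$, and by definition $X$ is hereditarily separable.

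There is no real obstacle here. The only thing to verify is that the definitions of hereditarily Lindel\"of and hereditarily separable coincide with the conditions $hl=\aleph_0$ and $hd=\aleph_0$, and the paper states exactly this. All of the substantive work, namely building the family $W=\{f_A\}$ with Lemma~\ref{lem: qcts point-closed separation}, is already contained in the proof of Theorem~\ref{thm: hd(X)<hl(Q_p)}.
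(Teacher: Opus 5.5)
Your proposal is correct and is essentially the paper's argument: the paper gives no separate proof, and the corollary is the case $\kappa=\aleph_0$ of Theorem~\ref{thm: hd(X)<hl(Q_p)}. Translating $hl(Q_p(X,Y))=\aleph_0$ and $hd(X)\le\aleph_0$ through the definitions, as you do, is all that is needed.
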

%------------------------------------------------------------%

The converses of Theorem~\ref{thm: hl(X)<hd(Q_p)} and Theorem~\ref{thm: hd(X)<hl(Q_p)} need not be true.
\begin{example}\label{exm: discrete D in Q_R}
Let $\mathbb{R}$ be the space of real numbers with the usual topology and $Y$ be any nontrivial $T_1$-space. Since $\mathbb{R}$ is second countable, every subspace of $\mathbb{R}$ is second countable and hence {L}indel\"of and separable. Thus, $hl(\mathbb{R})=\aleph_0$ and $hd(\mathbb{R})=\aleph_0$. We claim that $hd(Q_p(\mathbb{R},Y))>\aleph_0$ and also $hl(Q_p(\mathbb{R},Y))>\aleph_0$.
	
Let $y_0$ and $y_1$ be any two distinct points in $Y$ and let $V$ be an open subset of $Y$ such that $y_1\in V$ and $y_0\notin V$. For each $a\in\mathbb{R}$, define $f_a:\mathbb{R}\to Y$ such that
	$$f_a(t)=
	\begin{cases}
		y_1 & \text{if } t\in [a,a+1]\\
		y_0 & \text{if } t\notin [a,a+1].
	\end{cases}
	$$
By Lemma~\ref{lem: sufficient_qcts}, each $f_a$ is quasicontinuous. Let $D=\{f_a:a\in\mathbb{R}\}$. For each $a\in\mathbb{R}$, let $\mathcal{N}_a=\{f\in Q_p(\mathbb{R},Y):f(a)\in V\text{ and }f(a+1)\in V\}$. Note that $\mathcal{N}_a=[a;V]\cap[a+1;V]$ and hence it is open in $Q_p(\mathbb{R},Y)$ and contains $f_a$. If $b\neq a$ and $f_b\in\mathcal{N}_a$, then $a\in [b,b+1]$ and $a+1\in [b,b+1]$, a contradiction. Hence $\mathcal{N}_a\cap D=\{f_a\}$ for every $a\in\mathbb{R}$, and therefore $D$ is a discrete subspace of $Q_p(\mathbb{R},Y)$ of cardinality $\mathfrak{c}$. Thus $hd(Q_p(\mathbb{R},Y))\geq \mathfrak{c}>\aleph_0=hl(\mathbb{R})$. Also, note that a discrete space of cardinality $\mathfrak{c}$ has {L}indel\"of number $\mathfrak{c}$. Thus, $hl(Q_p(\mathbb{R},Y))\geq \mathfrak{c}>\aleph_0=hd(\mathbb{R})$.
\end{example}
%------------------------------------------------------------%
\begin{remark}
Since for any space $Z$, $hl(Z)\leq w(Z)$ and $hd(Z)\leq w(Z)$ holds. Using Example~\ref{exm: discrete D in Q_R} and from \cite{MR4116833} using $w(Q_p(\mathbb{R},\mathbb{R}))=\mathfrak{c}$, we conclude that $hl(Q_p(\mathbb{R},\mathbb{R}))=hd(Q_p(\mathbb{R},\mathbb{R}))=\mathfrak{c}$.
\end{remark}
%------------------------------------------------------------%
\begin{remark}
For a {T}ychonoff space $X$, it is known that $hd(X)\leq hl(C_p(X,\mathbb{R}))$ and $hl(X)\leq hd(C_p(X,\mathbb{R}))$ \cite[Corollary 4.8.4]{MR953314}. Also, $hl(C_p(X,\mathbb{R}))\leq hl(Q_p(X,\mathbb{R}))$ and $hd(C_p(X,\mathbb{R}))\leq hd(Q_p(X,\mathbb{R}))$ hold in general. In this case results of Theorem~\ref{thm: hl(X)<hd(Q_p)} and Theorem~\ref{thm: hd(X)<hl(Q_p)}, that is, $hd(X)\leq hl(Q_p(X,\mathbb{R}))$ and $hl(X)\leq hd(Q_p(X,\mathbb{R}))$ follow directly from these results.
\end{remark}
%------------------------------------------------------------%
\begin{theorem}\label{thm: l(Q_p)<|X|}
Let $X$ be an infinite Hausdorff space and $Y$ be a nontrivial second countable space then $l(Q_p(X,Y))\leq |X|$.
\end{theorem}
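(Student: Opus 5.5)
The plan is to bound the Lindel\"of number by the weight: for every space $Z$ we have $l(Z)\leq w(Z)$, since each open cover can be refined through a base. So it suffices to show $w(Q_p(X,Y))\leq |X|$. Let $\mathcal{V}$ be a countable base of $Y$, which exists because $Y$ is second countable. By the observation at the end of Section~\ref{sec2}, the family
\[
\mathcal{B}'=\{[x_1,\dots,x_n;V_1,\dots,V_n]:x_i\in X,\ V_i\in\mathcal{V},\ 1\leq i\leq n,\ n\in\mathbb{N}\}
\]
is a base for $Q_p(X,Y)$. Thus $w(Q_p(X,Y))\leq|\mathcal{B}'|$, and what remains is a cardinal count.

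For a fixed $n$, the assignment $\bigl((x_1,\dots,x_n),(V_1,\dots,V_n)\bigr)\mapsto[x_1,\dots,x_n;V_1,\dots,V_n]$ maps $X^n\times\mathcal{V}^n$ onto the level-$n$ part of $\mathcal{B}'$. Hence that part has cardinality at most $|X|^n\cdot\aleph_0^n$. Since $X$ is infinite, $|X|^n=|X|$, and this bound equals $|X|\cdot\aleph_0=|X|$. Taking the union over $n\in\mathbb{N}$ gives
\[
|\mathcal{B}'|\leq\aleph_0\cdot|X|=|X|,
\]
again because $X$ is infinite. Therefore $l(Q_p(X,Y))\leq w(Q_p(X,Y))\leq|X|$.

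The inequality $l(Z)\leq w(Z)$ is standard, but I would prove it inline. Let $\mathcal{W}$ be an open cover and $\mathcal{B}$ a base with $|\mathcal{B}|=w(Z)$. Let $\mathcal{B}_0$ be the set of $B\in\mathcal{B}$ that lie inside some member of $\mathcal{W}$, and for each $B\in\mathcal{B}_0$ choose one such $W_B\in\mathcal{W}$. Then $\{W_B:B\in\mathcal{B}_0\}$ covers $Z$ and has cardinality at most $w(Z)$.

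There is no serious obstacle here; the main care point is the infinite-cardinal arithmetic, which is exactly why $X$ is assumed infinite. Notably, the Hausdorff hypothesis on $X$ and the nontriviality of $Y$ are not needed for this upper bound. They presumably appear because of how the result is used later, for instance together with Theorem~\ref{thm: char(Q_p)=|X| char(Y)}, which gives $\chi(Q_p(X,Y))=|X|$ under these hypotheses.
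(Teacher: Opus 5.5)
Your proof is correct and follows the same route as the paper, namely $l(Q_p(X,Y))\leq w(Q_p(X,Y))\leq |X|$. The only difference is that the paper cites the equality $w(Q_p(X,Y))=|X|$ from earlier work, whereas you verify the needed inequality directly by counting the base $\mathcal{B}'$; this also confirms your remark that the Hausdorff and nontriviality hypotheses are not needed for this upper bound.
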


\begin{proof}
By \cite[Theorem 4.7]{MR4491569}, we have $w(Q_p(X,Y))=|X|$. For any space $Z$, the result $l(Z)\leq w(Z)$ holds. Hence $l(Q_p(X,Y))\leq |X|$ holds.
\end{proof}
%------------------------------------------------------------%
\begin{corollary}\label{cor413}
Let $X$ be an infinite countable Hausdorff space and $Y$ be a nontrivial second countable space then $Q_p(X,Y)$ is a {L}indel\"of space.
\end{corollary}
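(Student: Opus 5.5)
This is immediate from Theorem~\ref{thm: l(Q_p)<|X|}, so I will derive it as a special case rather than reprove anything. The hypotheses of Corollary~\ref{cor413} are exactly those of Theorem~\ref{thm: l(Q_p)<|X|} with the extra assumption that $X$ is countable. That theorem gives $l(Q_p(X,Y))\leq |X|$.

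Since $X$ is infinite and countable, $|X|=\aleph_0$, so $l(Q_p(X,Y))\leq\aleph_0$. By the definition of the Lindel\"of number, $l(Z)\geq\aleph_0$ for every space $Z$, so in fact $l(Q_p(X,Y))=\aleph_0$. This is precisely the statement that $Q_p(X,Y)$ is Lindel\"of.

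There is no real obstacle here. The substantive input is the weight computation $w(Q_p(X,Y))=|X|$ from \cite[Theorem 4.7]{MR4491569}, which Theorem~\ref{thm: l(Q_p)<|X|} already invokes. This is also why the hypotheses include that $X$ is infinite and Hausdorff and that $Y$ is nontrivial and second countable.

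If one preferred to avoid the weight computation, there is a direct route. Fix a countable base $\mathcal{V}$ of $Y$. Then the family $\mathcal{B}'$ of Section~\ref{sec2}, built from the countably many points of $X$ and the members of $\mathcal{V}$, is a countable base for $Q_p(X,Y)$. A second countable space is Lindel\"of, which gives the same conclusion.
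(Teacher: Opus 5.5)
Your proof is correct and follows exactly the paper's route: the paper states this corollary as an immediate consequence of Theorem~\ref{thm: l(Q_p)<|X|} with $|X|=\aleph_0$, and since $l(Z)\geq\aleph_0$ always holds, this gives $l(Q_p(X,Y))=\aleph_0$. Your alternative argument is also valid. With $X$ countable and $\mathcal{V}$ a countable base of $Y$, the base $\mathcal{B}'$ is countable, so $Q_p(X,Y)$ is second countable; this version does not need $X$ to be Hausdorff or infinite, nor $Y$ to be nontrivial.
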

%------------------------------------------------------------%
The equality in Theorem~\ref{thm: l(Q_p)<|X|}, that is, $l(Q_p(X,Y))= |X|$, need not hold.

\begin{example}\label{exm414}
Let $X=D\oplus S$ be the topological sum, where $S=\{0\}\cup\{1/m:m\in\mathbb{N}\}\subseteq\mathbb{R}$ has the usual subspace topology and let $D$ be an uncountable discrete space. Let $Y=\{0,1\}$ be a discrete space. Then $X$ is regular $T_1$-space and $|X|=|D|$. Since $X$ is a topological sum, it is easy to see that a map $f:X\to Y$ is quasicontinuous if and only if $f|_D$ and $f|_S$ are quasicontinuous. Hence $Q_p(X,Y)\cong Q_p(D,Y)\times Q_p(S,Y)$. Since $D$ is discrete, $Q_p(D,Y)=Y^D$ and it is compact by {T}ychonoff's theorem. Also $S$ is countable and  $Q_p(S,Y)\subseteq Y^S$ and thus $Q_p(S,Y)$ is second countable and hence it is {L}indel\"of. Since the product of a compact space and a {L}indel\"of space is {L}indel\"of, $Q_p(X,Y)$ is {L}indel\"of. Thus $l(Q_p(X,Y))=\aleph_0<|X|$.
\end{example}
%------------------------------------------------------------%
Recall that \cite{MR1919314} a subset $Y$ of a space $X$ is called \emph{$\kappa$-dense} if $[Y]_{\kappa}=X$ where
$[Y]_{\kappa}=\bigcup\{\overline{B} : B\subseteq Y \text{ and } |B|\le \kappa\}$. The set $[Y]_{\kappa}$ is called the \emph{$\kappa$-closure} of $Y$ in $X$. The cardinal $dt(X)=\aleph_0+\min\{\kappa:\text{every dense subset of $X$ is $\kappa$-dense in $X$}\}$ is called the \emph{density tightness} or \emph{$d$-tightness} of the space $X$.

The density tightness of a space $X$ can also be described as $dt(X)=\sup\{dt(x,X):x\in X\}$, where $dt(x,X)=\aleph_0+\min\{\kappa:$ for each dense subset $D$ of $X$, $\exists$ subset $B$ of $D$ such that $|B|\leq \kappa$ and $x\in\overline{B}\}$. It follows that $dt(X)\leq t(X)$, for any space $X$.

{A}sanov \cite{Asanov1} proved that $t(X^n)\leq l(C_p(X,\mathbb{R}))$ for a {T}ychonoff space $X$. In the next result, we compare the {L}indel\"of number of the space $Q_p(X,Y)$ and the density tightness of the space $X^n$.
%------------------------------------------------------------%
\begin{theorem}\label{thm: dt(X^n)<l(Q_p)}
	Let $X$ be a regular $T_1$-space and $Y$ be a nontrivial Hausdorff space. Then for every $n\in\mathbb{N}$, $dt(X^n)\leq l(Q_p(X,Y))$.
\end{theorem}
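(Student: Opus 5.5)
The plan is to adapt Asanov's argument to quasicontinuous functions, replacing continuity by quasicontinuity at the point where density of $D$ is used. Put $\kappa=l(Q_p(X,Y))$ and fix $n$, a dense set $D\subseteq X^n$ and a point $x=(x_1,\dots,x_n)\in X^n$. The goal is to find $B\subseteq D$ with $|B|\le\kappa$ and $x\in\overline{B}$. Since $Y$ is Hausdorff and nontrivial, I will fix distinct $y_0,y_1\in Y$ and disjoint open sets $V_0\ni y_0$, $V_1\ni y_1$.

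\textbf{Step 1 (a closed subspace and a cover by sets indexed by $D$).} Let
\[
F=\{f\in Q(X,Y): f(x_i)=y_1 \text{ for } 1\le i\le n\}.
\]
This set is closed in $Q_p(X,Y)$, because $Y$ is $T_1$ and evaluations are continuous. Hence $l(F)\le\kappa$. For $d=(d_1,\dots,d_n)\in D$ set $O_d=[d_1,\dots,d_n;V_1,\dots,V_1]$. I claim the family $\{O_d:d\in D\}$ covers $F$. Given $f\in F$, quasicontinuity of $f$ at $x_i$, applied to $X$ and $V_1$, yields a nonempty open $G_i$ with $f(G_i)\subseteq V_1$. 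Then $G_1\times\dots\times G_n$ is nonempty and open, so it meets $D$ in some $d$, and $f\in O_d$.

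\textbf{Step 2 (extracting $B$).} The point of Step 1 is that one only needs \emph{some} $d\in D$ with all coordinates landing in $f^{-1}(V_1)$, which does not require continuity. Since $l(F)\le\kappa$, choose $B\subseteq D$ with $|B|\le\kappa$ and $F\subseteq\bigcup_{d\in B}O_d$.

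\textbf{Step 3 (contradiction).} Suppose $x\notin\overline{B}$. Take a basic neighbourhood $U_1\times\dots\times U_n$ of $x$ disjoint from $B$, where $U_i=U_k$ whenever $x_i=x_k$ and the distinct $x_i$ get pairwise disjoint $U_i$ (possible since $X$ is Hausdorff). By regularity choose open $W_i\ni x_i$ with $\overline{W_i}\subseteq U_i$. Define $f$ by $f=y_1$ on $\overline{\bigcup W_i}$ and $f=y_0$ elsewhere. By Lemma~\ref{lem: sufficient_qcts}, or as in Lemma~\ref{lem: qcts compact-closed separation}, $f$ is quasicontinuous, so $f\in F$. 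Some $d\in B$ then has $f\in O_d$, which forces each $d_j\in\bigcup_k U_k$.

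\textbf{Main obstacle.} Step 3 only gives $d_j\in\bigcup_k U_k$ for each $j$, not $d_j\in U_j$, because a two-valued $f$ cannot tell the $U_k$ apart. My first attempt to fix this is to refine the index set at the start: in Step 1, require $d\in G_1\times\dots\times G_n$ with the $G_i$ taken inside the current $U_i$, and run the Lindel\"of argument over the sets $O_d$ for the $\kappa$-many $n$-tuples in all coordinate permutations of $d$. Then for every $f\in F$ some chosen $d$ would satisfy $d\in\prod U_{\sigma(j)}$ for a map $\sigma$, and one would separately exclude maps $\sigma$ other than the identity. If that fails, the fallback is to pass to a closed subspace of $Q_p(X,Y)$ of functions taking the value $y_0$ off $\bigcup_i\overline{W_i}$ and to argue coordinatewise using the disjointness of the $U_i$. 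Nailing down this coordinate-matching step is the crux of the proof.
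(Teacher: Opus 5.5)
Your architecture matches the paper's: a closed set $F$ of functions with prescribed values at the $x_i$, a cover $\{O_d\}$ indexed by points of $D$ obtained from quasicontinuity, a Lindel\"of extraction, and a two-valued quasicontinuous test function. But the coordinate-matching step you leave open is a genuine gap, and neither of your repairs closes it. As set up, the argument cannot succeed at all, because $O_d=\bigcap_j[d_j;V_1]$ depends only on the set $\{d_1,\dots,d_n\}$. So if $B$ covers $F$, so does every coordinate-permuted copy of $B$.

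Concretely, take $X=\mathbb{R}$, $n=2$, $x=(0,1)$, $D=\mathbb{Q}^2$, and small intervals $I_0\ni 0$, $I_1\ni 1$ with disjoint closures. Then $D\cap(I_0\times I_1)$ covers $F$ by quasicontinuity, hence so does its swap $B=D\cap(I_1\times I_0)$. This $B$ is a legitimate output of your Step 2, yet $x\notin\overline{B}$.

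For the same reason your permutation idea can never ``exclude maps $\sigma$ other than the identity.'' Moreover, both that idea and your fallback use the sets $U_i$, $W_i$ of Step 3. Those are chosen only after $B$, so they cannot shape the cover in Step 1; the construction is circular.

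The missing idea is to break the symmetry before extracting $B$.

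\begin{itemize}
\item Fix at the outset a neighbourhood $U=U_1\times\dots\times U_n$ of $x$ such that $U_i=U_k$ if $x_i=x_k$ and $U_i\cap U_k=\emptyset$ otherwise.
\item In Step 1, apply quasicontinuity at $x_i$ with the neighbourhood $U_i$, so $G_i\subseteq U_i$. This shows $\{O_d:d\in D\cap U\}$ already covers $F$, and you extract $B\subseteq D\cap U$.
\item In Step 3, take a basic neighbourhood $O_1\times\dots\times O_n$ of $x$ missing $B$ and set $U'_i=U_i\cap\bigcap\{O_k:x_k=x_i\}$. These $U'_i$ satisfy $U'_i\subseteq U_i$, have the same pattern, and $(U'_1\times\dots\times U'_n)\cap B=\emptyset$. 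Build your $f$ from $W_i$ with $\overline{W_i}\subseteq U'_i$.
\item For the witness $d\in B$ with $f\in O_d$, you have $d_j\in U_j$ because $B\subseteq U$, and $d_j\in U'_k$ for some $k$.
\item If $x_k\neq x_j$, then $U'_k\subseteq U_k$ is disjoint from $U_j$, which is impossible. Otherwise $x_k=x_j$ and $U'_k=U'_j$.
\item Hence $d_j\in U'_j$ for every $j$, so $d\in(U'_1\times\dots\times U'_n)\cap B$, a contradiction.
\end{itemize}

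This is exactly how the paper's proof handles it.
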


\begin{proof}
	Let $l(Q_p(X,Y))=\kappa$ and $n\in \mathbb{N}$ be fixed. Let $D$ be a dense subset in $X^n$ and $x=(x_1,\dots,x_n)\in X^n$. To show $dt(X^n)\leq \kappa$, we show that there is $B\subseteq D$ such that $|B|\leq \kappa$ and $x\in \overline{B}$.
	
	Since $X$ is Hausdorff, we can choose open subsets $U_1,\dots,U_n$ of $X$ such that for each $i\in\{1,\dots,n\}$ and $j\in\{1,\dots,n\}$,
	\begin{equation*}
		\begin{cases}
			\text{$U_i=U_j$}, & \text{if $x_i=x_j$}, \\
			\text{$U_i\cap U_j=\emptyset$}, & \text{if $x_i\neq x_j$},\\
			\text{$x_i\in U_i$}, & \text{for each $i\in\{1,\dots,n\}$}.\\
		\end{cases}\tag{$*$}
	\end{equation*}
	Then $U=U_1\times\dots\times U_n$ is an open set in $X^n$ containing $x$. Since $D\cap U$ is dense in $U$ and $x \in \overline{D \cap U}$. For any $B\subseteq D\cap U$, then $B\subseteq D$, thus it suffices to find $B\subseteq D\cap U$ with $x\in \overline{B}$.
	
	Let $y_0$ and $y_1$ be any two distinct points in $Y$. Let $\mathcal{A}= \{f \in Q_p(X,Y): f(x_i) = y_0, \text{ for each }i\in \{1,\dots,n\}\}$. Since the evaluation function at each $a\in X$, $e_a:Q_p(X,Y)\to Y$ such that $e_a(f)=f(a)$ is continuous \cite[Lemma 5.5]{MR4491569} and $Y$ is a $T_1$-space, thus $\mathcal{A}=\cap_{i=1}^n e^{-1}_{x_i}(\{y_0\})$ is a closed subset of the space $Q_p(X,Y)$. Then $l(\mathcal{A}) \le \kappa$. Let $\mathcal{V}_z = \{g \in Q_p(X,Y): g(z_i)\neq y_1,\text{ for each }i\in \{1,\dots,n\}\}$ for all $z = (z_1, \dots, z_n) \in D$. Take arbitrary $f \in \mathcal{A}$. Since $x \in \overline{D}$, $f$ is quasicontinuous, $Y$ is nontrivial Hausdorff and $f(x_i) = y_0$ for all $i\in \{1,\dots,n\}$, there is a point $z= (z_1, \dots, z_n) \in D$ such that $f(z_i)\neq y_1$ for all $i\in \{1,\dots,n\}$. Thus $\mathcal{A}\subseteq\cup\{\mathcal{V}_z: z \in D\}$. Since $l(\mathcal{A}) \le \kappa$, there is a $B \subseteq D$ such that $|B| \le \kappa$ and $\mathcal{A} \subseteq \cup\{\mathcal{V}_z: z \in B\}$. We show that $x \in \overline{B}$.
	
	Suppose $x \notin \overline{B}$. Since $X$ is regular, by Lemma~\ref{lem: qcts compact-closed separation}, there exists $h \in Q_p(X,Y)$ such that $h \in \mathcal{A}$ and $h(X \setminus \cup_{i=1}^n U'_i) = \{y_1\}$, where $U'_i$ are open sets in $X$ satisfying $(*)$ and $U'_i \subseteq U_i$, $(U'_1 \times \dots \times U'_n) \cap B = \emptyset$. Since $h \in \mathcal{V}_{z'}$ for some $z' \in B$, we have $h(z'_i)\neq  y_1$, for $i\in\{1,\dots,n\}$, where $z'_i$ is the $i$th coordinate of the point $z'$. Since $z' \in B\subseteq (D\cap U) \subseteq U$, for each $i\in\{1,\dots,n\}$ we have $z'_i \in U_i$. Thus $z'_i \in U'_i$, since $h(z'_i) \neq y_1$ and $U_i$ is disjoint from $U'_j$ if $x_i \ne x_j$. Therefore $z' \in (U'_1 \times \dots \times U'_n) \cap B$; a contradiction.
\end{proof}
%------------------------------------------------------------%
The equality in Theorem~\ref{thm: dt(X^n)<l(Q_p)}, that is, $l(Q_p(X,Y))= dt(X^n)$ for every $n\in\mathbb{N}$ need not hold.

\begin{example}\label{exm: dt(X^n)=l(Q_p) fail}
	Let $\kappa$ be an infinite cardinal number. Let $X=S=\{0\}\cup\{1/m:m\in\mathbb{N}\}$ be the subspace of $\mathbb{R}$ with the usual topology and let $Y$ be a discrete space with $|Y|>\kappa$. Note that $X$ is a compact metrizable space and hence it is regular. Every finite power $X^n$ is metrizable and hence it is first countable for each $n\in\mathbb{N}$. Using $dt(X^n)\leq t(X^n)$ and \cite[1.7.13]{MR1039321}, we have $dt(X^n)\leq t(X^n)=\aleph_0\leq \kappa$. Let $x_0$ be an isolated point in $X$. We define $e_{x_0}:Q_p(X,Y)\to Y$ such that $e_{x_0}(f)=f(x_0)$. Then $e_{x_0}$ is continuous and onto. Thus, $l(Q_p(X,Y))\geq l(Y)>\kappa$.
\end{example}
%------------------------------------------------------------%
The \emph{cellularity} or \emph{Souslin number} of a space $X$ is defined by $c(X)=\aleph_0+\sup\{\vert\mathcal{U}\vert:\mathcal{U}$ is a family of pairwise disjoint nonempty open subsets of $X\}$ \cite{MR3024898}. A space $X$ is said to have \emph{Souslin property} if $c(X)=\aleph_0$.

The \emph{spread} of a space $X$ is  defined as $s(X)=\aleph_0+\sup\{|D|:D\subseteq X \text{ is discrete}\}$ \cite{MR3024898}.

The weight of a space $X$ is defined by $w(X)=\aleph_0+\min\{\vert\mathcal{B}\vert:\mathcal{B}$ is a base for $X\}$.
%------------------------------------------------------------%
\begin{theorem} \label{thm: c(X)< s(Q_p)}
Let $X$ be a topological space and let $Y$ be a nontrivial $T_1$-space. Then $c(X)\leq s(Q_{p}(X,Y))$.
\end{theorem}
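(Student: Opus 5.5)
Given a family $\mathcal{U}$ of pairwise disjoint nonempty open subsets of $X$, I will produce a discrete subspace of $Q_p(X,Y)$ of cardinality $|\mathcal{U}|$. Taking the supremum over all such families then gives $c(X)\leq s(Q_p(X,Y))$; the $\aleph_0$ summand in both definitions causes no trouble.

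Fix distinct $y_0,y_1\in Y$. Since $Y$ is $T_1$, the set $V=Y\setminus\{y_0\}$ is open, contains $y_1$, and omits $y_0$. For each $U\in\mathcal{U}$ choose a point $x_U\in U$ and define $f_U:X\to Y$ by $f_U(z)=y_1$ if $z\in\overline{U}$ and $f_U(z)=y_0$ otherwise.

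First I check that each $f_U$ is quasicontinuous, using Lemma~\ref{lem: sufficient_qcts}.
\begin{itemize}
\item If $z\in\overline{U}$, take $G=U$: then $z\in\overline{G}$ and $f_U$ is constantly $y_1=f_U(z)$ on $G$.
\item If $z\notin\overline{U}$, take $G=X\setminus\overline{U}$: this is an open set containing $z$ on which $f_U$ is constantly $y_0$.
\end{itemize}
No separation axiom on $X$ is needed here. This matters, because the theorem assumes nothing about $X$, so Lemmas~\ref{lem: qcts point-closed separation} and~\ref{lem: qcts compact-closed separation} are unavailable.

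Next I show that $D=\{f_U:U\in\mathcal{U}\}$ is discrete and that $U\mapsto f_U$ is injective. The natural neighbourhood of $f_U$ is $[x_U;V]$, which contains $f_U$ because $f_U(x_U)=y_1\in V$. Take $W\in\mathcal{U}$ with $W\neq U$. Since $U\cap W=\emptyset$ and $U$ is open, $U\cap\overline{W}=\emptyset$, so $x_U\notin\overline{W}$. Hence $f_W(x_U)=y_0\notin V$, that is, $f_W\notin[x_U;V]$. Therefore $[x_U;V]\cap D=\{f_U\}$. This shows at once that the $f_U$ are pairwise distinct, so $|D|=|\mathcal{U}|$, and that $D$ is a discrete subspace.

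The only point needing care is using the closures $\overline{U}$ rather than the sets $U$ themselves. The indicator of $U$ may fail to be quasicontinuous at boundary points outside $U$, whereas the indicator of $\overline{U}$ is always quasicontinuous. The step that makes the closures harmless is the observation that an open set disjoint from $W$ is disjoint from $\overline{W}$.
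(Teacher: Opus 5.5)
Your proof is correct and is essentially the paper's argument. The paper also takes the two-valued functions that are constant on $\overline{U}$ and on $X\setminus\overline{U}$ (with the roles of $y_0,y_1$ swapped), gets quasicontinuity from Lemma~\ref{lem: sufficient_qcts}, and isolates each $f_U$ by $[x_U;V]$, using $x_U\notin\overline{W}$ for $W\neq U$.
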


\begin{proof}
Let $\mathcal{U}$ be a pairwise disjoint family of nonempty open subsets of $X$. For each $U\in\mathcal{U}$, choose a point $x_{U}\in U$. Let $y_0$ and $y_1$ be any two distinct points in $Y$ and let $V$ be an open subset of $Y$ such that $y_0\in V$ and $y_1\notin V$. Let us define $f_{U}:X\rightarrow Y$ by
\[ f_{U}(x)=
	\begin{cases}
		y_0, & x\in\overline{U},\\
		y_1, & x\in X\setminus\overline{U}.
	\end{cases}
\]
By Lemma~\ref{lem: sufficient_qcts}, $f_{U}$ is quasicontinuous. Let $\mathcal{A}=\{f_{U}:U\in\mathcal{U}\}$. We claim that $\mathcal{A}$ is a discrete subset of $Q_p(X,Y)$.

For each $U\in\mathcal{U}$, consider the open set
$N_{U}=\{f\in Q_{p}(X,Y):f(x_{U})\in V\}$, which contains $f_{U}$. For any $W\in\mathcal{U}$ with $W\neq U$, $U$ and $W$ are disjoint open subsets of $X$. Therefore $x_{U}\notin\overline{W}$ and $f_{W}(x_{U})=y_1\notin V$. Thus $f_{W}\notin N_{U}$, and hence $N_{U}\cap\mathcal{A}=\{f_{U}\}$, which shows $\mathcal{A}$ is a discrete subspace of $Q_{p}(X,Y)$. Note that $|\mathcal{U}|=|\mathcal{A}|\leq s(Q_{p}(X,Y))$. Now taking the supremum over all pairwise disjoint families of nonempty open subsets of $X$, we have $c(X)\leq s(Q_{p}(X,Y))$.
\end{proof}

%------------------------------------------------------------%
\begin{corollary}\label{cor: c(X)< hl(Q_p)}
Let $X$ be a space and $Y$ be any nontrivial $T_1$-space. Then $c(X)\leq hl(Q_p(X,Y))$.
\end{corollary}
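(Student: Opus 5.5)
The corollary follows by combining Theorem~\ref{thm: c(X)< s(Q_p)} with the standard inequality $s(Z)\leq hl(Z)$, valid for every topological space $Z$. Taking $Z=Q_p(X,Y)$ gives
\[
c(X)\leq s(Q_p(X,Y))\leq hl(Q_p(X,Y)),
\]
which is the claim. No separation axiom on $X$ is needed, since Theorem~\ref{thm: c(X)< s(Q_p)} assumes none and only uses that $Y$ is a nontrivial $T_1$-space.

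We check $s(Z)\leq hl(Z)$ to keep the argument self-contained. Let $D\subseteq Z$ be a discrete subspace. For each $d\in D$ choose an open set $O_d$ in $Z$ with $O_d\cap D=\{d\}$. Then $\{O_d\cap D:d\in D\}$ is an open cover of the subspace $D$. No proper subfamily covers $D$, because $d$ lies only in $O_d\cap D$. Hence $l(D)\geq|D|$ whenever $D$ is infinite, so $|D|\leq l(D)\leq hl(Z)$. Finite discrete sets are absorbed by the $\aleph_0$ term in both cardinal functions. Taking the supremum over all discrete $D$ gives $s(Z)\leq hl(Z)$.

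Alternatively, one can argue directly from the proof of Theorem~\ref{thm: c(X)< s(Q_p)}. For a disjoint family $\mathcal{U}$ of nonempty open sets, the set $\mathcal{A}=\{f_U:U\in\mathcal{U}\}$ constructed there is a discrete subspace of $Q_p(X,Y)$ of cardinality $|\mathcal{U}|$. Therefore $l(\mathcal{A})\geq|\mathcal{U}|$ when $\mathcal{U}$ is infinite. Taking suprema gives $c(X)\leq hl(Q_p(X,Y))$.

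There is no real obstacle here. The only point requiring care is the cardinal bookkeeping: the $\aleph_0$ summand in the definitions of $c$, $s$, $l$ and $hl$ handles finite families.
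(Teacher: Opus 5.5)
Your proposal is correct and follows the paper's proof: the paper likewise chains Theorem~\ref{thm: c(X)< s(Q_p)} with the general inequality $s(Z)\leq hl(Z)$. Your check of that inequality, using the open cover $\{O_d\cap D: d\in D\}$ of a discrete subspace $D$, is a valid addition that the paper simply takes as known.
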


\begin{proof}
Since for any space $Z$, by definition $s(Z)\leq hl(Z)$ holds. By Theorem \ref*{thm: c(X)< s(Q_p)} the result follows .
\end{proof}
%------------------------------------------------------------%
The equality in Theorem~\ref{thm: c(X)< s(Q_p)}, that is, $s(Q_p(X,Y))= c(X)$, need not hold.

\begin{example}\label{exm: c(X)= s(Q_p) fail}
Let $\mathbb{R}$ be the space of real numbers with the usual topology. Then $c(\mathbb{R})=\aleph_0$. By using the set $D$ in Example~\ref{exm: discrete D in Q_R}, it is easy to see that $s(Q_p(\mathbb{R},\mathbb{R}))\geq\mathfrak{c}>\aleph_0=c(\mathbb{R})$.
\end{example}

%------------------------------------------------------------%
In \cite{MR4491569}, it is shown that for a regular space $X$ and a nontrivial $T_1$-space $Y$, we have $s(X)\leq s(Q_p(X,Y))$. In fact, a more general result holds, which is given .
%------------------------------------------------------------%
\begin{theorem}\label{thm: s(X^n)<s(Q_p)^n}
Let $X$ be a regular space and $Y$ be a nontrivial $T_1$-space. Then $s(X^{n})\le s((Q_p(X,Y))^{n})$ for each $n\in\mathbb{N}$.
\end{theorem}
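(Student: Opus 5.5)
Given a discrete subspace $E\subseteq X^n$, I will produce an injective map $E\to (Q_p(X,Y))^n$ whose image is discrete. Then $|E|\le s((Q_p(X,Y))^n)$, and taking the supremum over $E$ gives the claim.

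\emph{Construction.} Fix distinct $y_0,y_1\in Y$ and an open $V\subseteq Y$ with $y_1\in V$ and $y_0\notin V$; this uses $Y$ being $T_1$. Take $e=(e_1,\dots,e_n)\in E$. Since $E$ is discrete, there are open sets $W^e_1,\dots,W^e_n$ of $X$ with $e_i\in W^e_i$ and
\[
(W^e_1\times\cdots\times W^e_n)\cap E=\{e\}.
\]
By regularity choose open $O^e_i$ with $e_i\in O^e_i\subseteq\overline{O^e_i}\subseteq W^e_i$. Define $f^e_i:X\to Y$ by $f^e_i=y_1$ on $\overline{O^e_i}$ and $f^e_i=y_0$ elsewhere. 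This is quasicontinuous by Lemma~\ref{lem: sufficient_qcts}: points of $\overline{O^e_i}$ are in the closure of the open set $O^e_i$, and the other points lie in the open set $X\setminus\overline{O^e_i}$. Alternatively, Lemma~\ref{lem: qcts point-closed separation} gives $f^e_i(e_i)=y_1$ and $f^e_i(X\setminus W^e_i)=\{y_0\}$ directly. Put $\Phi(e)=(f^e_1,\dots,f^e_n)\in (Q_p(X,Y))^n$.

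\emph{Discreteness.} Let $N_e=[e_1;V]\times\cdots\times[e_n;V]$, an open neighbourhood of $\Phi(e)$ in the product. Suppose $\Phi(e')\in N_e$ for some $e'\in E$. Then $f^{e'}_i(e_i)\in V$ for each $i$, so $f^{e'}_i(e_i)\ne y_0$. Hence $e_i\in\overline{O^{e'}_i}\subseteq W^{e'}_i$ for each $i$, so $e\in (W^{e'}_1\times\cdots\times W^{e'}_n)\cap E=\{e'\}$, i.e.\ $e=e'$. Thus $N_e\cap\Phi(E)=\{\Phi(e)\}$, which gives both that $\Phi$ is injective and that $\Phi(E)$ is discrete.

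The point needing care is that coordinates of $e$ may coincide and that different points of $E$ can share coordinates. The argument handles this because each $e$ gets its own neighbourhood rectangle, and the test at $e$ uses the functions belonging to $e'$, not those of $e$. So no Hausdorff separation of the coordinates is needed; regularity only serves to make the functions quasicontinuous with the right level sets.
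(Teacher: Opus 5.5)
Your proof is correct and is essentially the paper's argument. You isolate each point of the discrete set by an open box, and attach to each coordinate a two-valued quasicontinuous function that takes the ``inside'' value at $d_i$ and the ``outside'' value off the $i$th side of the box. The neighbourhood $[d_1;V]\times\cdots\times[d_n;V]$ then excludes every other point's tuple.

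The differences are minor. You build the functions directly from a regular shrinking $\overline{O^e_i}\subseteq W^e_i$ via Lemma~\ref{lem: sufficient_qcts}, rather than citing Lemma~\ref{lem: qcts point-closed separation}. This incidentally avoids that lemma's hypothesis that the closed set (the paper's $X\setminus U_{d,i}$) be nonempty. You also make explicit the injectivity of $d\mapsto f_d$, which the paper only asserts.
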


\begin{proof}
Let $n\in\mathbb{N}$ be fixed and let $D$ be a discrete subspace of $X^{n}$. Then for each $d=(d_{1},\dots,d_{n})\in D$, there exists an open subset $U_{d}=U_{d,1}\times\cdots\times U_{d,n}$ of $X^{n}$ such that and $U_{d}\cap D=\{d\}$. Suppose that $y_0$ and $y_1$ are any two distinct points in $Y$ and $V$ is an open subset of $Y$ such that $y_0 \in V$ and $y_1 \notin V$.
	
Since $X$ is regular, for fixed $d\in D$ and $i\in\{1,\dots,n\}$, by Lemma~\ref{lem: qcts point-closed separation} there exists a quasicontinuous function $f_{d,i}:X\to Y$ such that $f_{d,i}(d_{i})=y_0$ and $f_{d,i}(X\setminus U_{d,i})=\{y_1\}$. Let $f_{d}=(f_{d,1},\dots,f_{d,n})\in (Q_{p}(X,Y))^{n}$, and let $A=\{f_{d}:d\in D\}$. We claim that $A$ is a discrete subset of the space $(Q_{p}(X,Y))^{n}$.
	
Let $d\in D$ be fixed. For each $i\in\{1,\dots,n\}$, consider the basic open set $W_{d,i}=[d_i;V]$. Then $W_{d}=W_{d,1}\times\cdots\times W_{d,n}$ is an open neighborhood of $f_{d}$ in $(Q_{p}(X,Y))^{n}$. We show that $W_{d}\cap A=\{f_{d}\}$. Let $d'=(d'_{1},\dots,d'_{n})\in D$ with $d'\ne d$. There exists an open subset $U_{d'}=U_{d',1}\times\cdots\times U_{d',n}$ of $X^n$ such that $U_{d'}\cap D=\{d'\}$. Thus $d\notin U_{d'}$ and hence there exists $i\in\{1,\dots,n\}$ such that $d_{i}\notin U_{d',i}$. Since $d_{i}\in X\setminus U_{d',i}$, we have $f_{d',i}(d_{i})=y_1$. Thus $f_{d',i}(d_{i})\notin V$, hence $f_{d',i}\notin W_{d,i}$. Consequently, $f_{d'}\notin W_{d}$. Hence $W_{d}\cap A=\{f_{d}\}$, this proves our claim.
	
The map $d\mapsto f_{d}$ is injective. Therefore $|D|=|A|\le s((Q_{p}(X,Y))^{n})$. Since $D$ is an arbitrary discrete subspace of $X^{n}$, it follows that $s(X^{n})\le s((Q_{p}(X))^{n})$.
\end{proof}
%------------------------------------------------------------%
\begin{remark}\label{rem: s(Q_p(R))}
Using the discrete subset $D$ in Example~\ref{exm: discrete D in Q_R}, we have $s(Q_p(\mathbb{R},\mathbb{R}))\geq \mathfrak{c}$. By \cite[Theorem 4.9]{MR4116833}, we have $w(Q_p(\mathbb{R},\mathbb{R}))=\mathfrak{c}$. It follows that $s(Q_p(\mathbb{R},\mathbb{R}))\leq w(Q_p(\mathbb{R},\mathbb{R}))= \mathfrak{c}$. Thus, $s(Q_p(\mathbb{R},\mathbb{R}))=\mathfrak{c}$.
\end{remark}
%------------------------------------------------------------%
The equality in Theorem~\ref{thm: s(X^n)<s(Q_p)^n}, that is, $s(X^{n})= s((Q_p(X,Y))^{n})$, need not hold.
\begin{example}\label{exm: s(X^n)=s(Q_p)^n fail}
Let $D$ be the  discrete subset in Example~\ref{exm: discrete D in Q_R}.
For each $n\in\mathbb{N}$, let $D_n=\{(f_a,\mathbf{0},\ldots,\mathbf{0}):a\in\mathbb{R}\}\subseteq (Q_p(\mathbb{R},\mathbb{R}))^n$, where $f_a\in D$ and $\mathbf{0}$ denotes the constant zero function. Since $D$ is discrete in $Q_p(\mathbb{R},\mathbb{R})$, for every $a\in\mathbb{R}$ there exists an open set $\mathcal{N}_a$ in $Q_p(\mathbb{R},\mathbb{R})$ such that $\mathcal{N}_a\cap D=\{f_a\}$. Let $\mathcal{W}_a=\mathcal{N}_a\times Q_p(\mathbb{R},\mathbb{R})\times\cdots\times Q_p(\mathbb{R},\mathbb{R})$. Then $\mathcal{W}_a$ is  open in $(Q_p(\mathbb{R},\mathbb{R}))^n$ and $(f_a,\mathbf{0},\ldots,\mathbf{0})\in\mathcal{W}_a$. If $(f_b,\mathbf{0},\ldots,\mathbf{0})\in\mathcal{W}_a\cap D_n$, then $f_b\in\mathcal{N}_a$, and hence $f_b=f_a$, which implies $b=a$. Therefore, $\mathcal{W}_a\cap D_n=\{(f_a,\mathbf{0},\ldots,\mathbf{0})\}$. Thus, $D_n$ is a discrete subspace of $(Q_p(\mathbb{R},\mathbb{R}))^n$. Hence, $s((Q_p(\mathbb{R},\mathbb{R}))^{n})\geq |D_n|=\mathfrak{c}$, while $s(\mathbb{R}^n)=\aleph_0$ for each $n\in\mathbb{N}$.
\end{example}
%------------------------------------------------------------%
\begin{theorem}\label{thm: s(omega)<s(Q_p)}
Let $X$ be a regular $T_1$-space and $Y$ be an infinite $T_1$-space. Let $\Omega_n(X)=\{(x_1,\dots,x_n)\in X^n: x_i\neq x_j \text{ whenever } i\neq j\}$. Then $s(\Omega_n(X))\le s(Q_p(X,Y))$ for every $n\in\mathbb{N}$.
\end{theorem}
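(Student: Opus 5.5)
Given a discrete subspace $D\subseteq\Omega_n(X)$, we build an injective assignment $d\mapsto f_d\in Q(X,Y)$ whose image is discrete in $Q_p(X,Y)$. Since $Y$ is infinite, we fix $n+1$ distinct points $y_0,y_1,\dots,y_n\in Y$. Because $Y$ is $T_1$, the set $V_i=Y\setminus\{y_0,\dots,y_{i-1},y_{i+1},\dots,y_n\}$ is an open neighbourhood of $y_i$ avoiding every other $y_j$.

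Fix $d=(d_1,\dots,d_n)\in D$. By discreteness we choose a basic open box $U_d=U_{d,1}\times\cdots\times U_{d,n}$ with $U_d\cap D=\{d\}$. The points $d_i$ are pairwise distinct and $X$ is regular $T_1$, hence Hausdorff, so we may shrink the boxes. We arrange that the closures $\overline{U_{d,i}}$, $i=1,\dots,n$, are pairwise disjoint: first take pairwise disjoint open neighbourhoods of the $d_i$, then use regularity to pass to smaller open sets whose closures lie inside them. Shrinking only preserves $U_d\cap D=\{d\}$. We then define
\[
f_d(z)=\begin{cases} y_i & \text{if } z\in\overline{U_{d,i}},\\ y_0 & \text{if } z\in X\setminus\bigcup_{i=1}^n\overline{U_{d,i}}.\end{cases}
\]
Quasicontinuity follows from Lemma~\ref{lem: sufficient_qcts}. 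A point of $\overline{U_{d,i}}$ lies in the closure of the open set $U_{d,i}$, on which $f_d$ is constant with value $y_i$. A point of the open set $X\setminus\bigcup_i\overline{U_{d,i}}$ lies in that set itself, where $f_d\equiv y_0$.

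For discreteness, we consider the neighbourhood $W_d=[d_1,\dots,d_n;V_1,\dots,V_n]$ of $f_d$. Suppose $d'\neq d$ and $f_{d'}\in W_d$. Then $f_{d'}(d_i)\in V_i$ forces $f_{d'}(d_i)=y_i$, since $f_{d'}$ takes values only in $\{y_0,\dots,y_n\}$. This places $d_i\in\overline{U_{d',i}}$ for every $i$, so $d\in\overline{U_{d',1}}\times\cdots\times\overline{U_{d',n}}$. This is the main obstacle: we only know $d\notin U_{d'}$, not that $d$ avoids the product of the closures. To fix it we apply regularity once more. We first choose a box $G_{d'}$ isolating $d'$ in $D$, with pairwise disjoint coordinates. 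We then pick open $U_{d',i}$ with $d'_i\in U_{d',i}\subseteq\overline{U_{d',i}}\subseteq G_{d',i}$, and build $f_{d'}$ using these $U_{d',i}$. In that case $d\in\prod_i\overline{U_{d',i}}\subseteq G_{d'}$, and since $G_{d'}\cap D=\{d'\}$ we get $d=d'$, a contradiction. Hence $W_d\cap\{f_{d'}:d'\in D\}=\{f_d\}$.

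The same neighbourhood argument shows that $d\mapsto f_d$ is injective, because $f_{d'}=f_d$ would put $f_{d'}\in W_d$. So $\{f_d:d\in D\}$ is a discrete subspace of $Q_p(X,Y)$ of cardinality $|D|$. Taking the supremum over all discrete $D\subseteq\Omega_n(X)$ gives $s(\Omega_n(X))\le s(Q_p(X,Y))$. The distinctness of the coordinates, which is exactly the restriction to $\Omega_n$, is used to separate the $\overline{U_{d,i}}$, so that $f_d$ is well defined. The infinitude of $Y$ is used to obtain $n+1$ distinct labels, so that each coordinate is recorded individually.
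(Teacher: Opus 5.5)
Your proof is correct and matches the paper's: isolate $d$ by a box $B_d$ (your $G_d$), choose $U_{d,i}\ni d_i$ with $\overline{U_{d,i}}\subseteq B_{d,i}$, label the pieces with distinct $y_0,\dots,y_n$, and separate using $[d_1,\dots,d_n;G_1,\dots,G_n]$ (your $V_i$). The only difference is cosmetic: you make the box factors pairwise disjoint, so the closures $\overline{U_{d,i}}$ are disjoint and $f_d$ needs no minimum-index rule on $H_d=\bigcup_i\overline{U_{d,i}}$. In a final write-up, build $f_d$ directly from this corrected choice instead of presenting the first attempt and then repairing it.
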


\begin{proof}
Let $D$ be a discrete subset of the subspace $\Omega_n(X)$. For each $d=(d_1,\dots,d_n)\in D$ there exists an open subset $O_d$ of $X^n$ such that $O_d\cap D=\{d\}$. Then there exists a basic open subset $B_d=B_{d,1}\times\cdots\times B_{d,n}$ of $X^n$ such that $d\in B_d\subseteq O_d$. Since $d\in\Omega_n(X)$, $d_1,\dots,d_n$ are distinct points in $X$. Since $X$ is regular, we can choose pairwise disjoint open sets $U_{d,1},\dots,U_{d,n}$ such that $d_i\in U_{d,i}\subseteq\overline{U_{d,i}}\subseteq B_{d,i}$ for each $i\in\{1,\dots,n\}$. Let $H_d=\overline{U_{d,1}}\cup\dots\cup\overline{U_{d,n}}$. For $x\in H_d$, let $k=\min\{i\in\{1,\dots,n\}:x\in \overline{U_{d,i}}\}$. Note that $d_i\notin \overline{U_{d,j}}$ for $i\neq j$. Choose $y_0,y_1,\dots,y_n$ distinct points in $Y$.
	
For each $d\in D$, let $g_d:X\to Y$ be a function such that
\[g_d(x)=
	\begin{cases}
		y_k&\text{if } x\in H_d\\
		y_0&\text{if } x\in X\setminus H_d\\
	\end{cases}
\]
By Lemma~\ref{lem: sufficient_qcts}, each $g_d$ is quasicontinuous. For each $i\in\{0,1,\dots,n\}$, let $G_i=Y\setminus(\{y_0,y_1,\dots,y_n\}\setminus\{y_i\})$. Since $Y$ is a $T_1$-space, each $G_i$ is an open subset of $Y$. Note that $y_i\in G_i$ and for each $j\in\{0,1,\dots,n\}$ whenever $j\neq i$, $y_j\notin G_i$ . We claim that  $\{g_d:d\in D\}$ is a discrete subset of $Q_p(X,Y)$.
	
Let $N_d=[d_1,\dots,d_n;G_1,\dots,G_n]$. For each $i\in\{1,\dots,n\}$, $d_i\in U_{d,i}$ and $g_d(d_i)=y_i\in G_i$. Thus, $N_d$ is open in $Q_p(X,Y)$ and $g_d\in N_d$. Let $d'=(d'_1,\dots,d'_n)\in D$ with $d'\neq d$. Since $B_{d'}\subseteq O_{d'}$ and $O_{d'}\cap D=\{d'\}$, we have $d\notin B_{d'}$. Therefore there exists $i\in\{1,\dots,n\}$ such that $d_i\notin B_{d',i}$. Since $\overline{U}_{d',i}\subseteq B_{d',i}$, it follows that $d_i\notin \overline{U}_{d',i}$. If $d_i\in \overline{U}_{d',j}$ for some $j\neq i$, then $g_{d'}(d_i)=y_k\notin G_i$ for some $k\neq i$. If $d_i\notin\bigcup_{k=1}^n \overline{U}_{d',k}$, then $g_{d'}(d_i)=y_0\notin G_i$. Thus, $g_{d'}(d_i)\notin G_i$, so $g_{d'}\notin [d_i;G_i]$, and therefore $g_{d'}\notin N_d$. Thus $N_d\cap\{g_e:e\in D\}=\{g_d\}$ for every $d\in D$, which shows that $\{g_d:d\in D\}$ is a discrete subspace of $Q_p(X,Y)$. The map $d\mapsto g_d$ is therefore injective, so $|D|=|\{g_d:d\in D\}|\le s(Q_p(X,Y))$. Since $D$ was an arbitrary discrete subspace of $\Omega_n(X)$, it follows that $s(\Omega_n(X))\le s(Q_p(X,Y))$.
\end{proof}

%%%%%%%%%%%%%%%%%%%%%%%%%%%%%%%%%%%%%%%%%%%%%%%%%%%%%%%%%%%%%%
% Bibliography
%%%%%%%%%%%%%%%%%%%%%%%%%%%%%%%%%%%%%%%%%%%%%%%%%%%%%%%%%%%%%%

%%%%%%%%%%%%%%%%%%%%%%%%%%%%%%%%%%%%%%%%%%%%%%%%%%%%%%%%%%%%%%
\end{document}